\documentclass[11pt, twoside, letterpaper]{amsart}
\usepackage{amsmath,amsthm,amsfonts,amssymb,epsfig}
\usepackage{setspace}
\usepackage{stmaryrd}
\onehalfspacing

\usepackage{amsmath,amssymb,amsthm,euro} 
\usepackage{bbm}
\usepackage{stmaryrd}
 \usepackage{tensor}

\usepackage{fancyhdr}
\pagestyle{fancy}
\thispagestyle{empty}
\lhead{AN APPROXIMATION OF PREDICTABLE  PROCESSES}
%\lhead{ON A DYADIC APPROX. OF PRED.  PROC.  OF INTEGR.  VARIATION}
\rhead{}

\newcommand\F{\mathcal F}
\newcommand\I{\mathbbm{1}}
\renewcommand{\P}{\mathbb{P}}
\newcommand{\Q}{\mathbb{Q}}

\newcommand{\M}{\mathcal M}

\newcommand{\D}{\mathcal D}

\renewcommand{\rho}{\varrho}

\renewcommand\i{\infty}

\newcommand\N{\mathbb{N}}
\newcommand\E{\mathbb{E}}
\newcommand\R{\mathbb{R}}
\newcommand\Z{\mathbb{Z}}

\newcommand\Lone{{L^1 (\P)}}

\newtheorem{theorem}{Theorem}

\newtheorem{corollary}[theorem]{Corollary}

\newtheorem{lemma}[theorem]{Lemma}

\theoremstyle{definition}

\theoremstyle{example}

\newcommand\A{\mathcal{A}}
\renewcommand\t{\tau}
\newcommand\s{\sigma}
\newcommand\w{\omega}

\newcommand\e{\varepsilon}
\newcommand\B{\mathcal{B}}
\renewcommand\L{\lambda}

\begin{document}

\title{On a dyadic approximation of predictable  processes  of finite  variation}
\author{Pietro Siorpaes}
%\date{26/06/1013}
\maketitle
\begin{abstract}
We show that any c\`adl\`ag predictable process  of finite variation is an a.s.  limit of \emph{elementary} predictable processes;  it  follows that predictable stopping times  can be approximated `from below' by predictable stopping times which take finitely many values.
We then obtain as corollaries two classical theorems: predictable stopping times are announceable, and	 an increasing process is predictable iff it is natural.
\end{abstract}

%AMS 2010 classification
%60G07 General theory of processes
%60G40  	Stopping times
%60G44  	Martingales with continuous parameter
%\keywords{ predictable; natural, compensator, submartingale, stopping time, announceable.}

We recall that a process $S=(S_t)_t$ is called \emph{of class D} if the family of random variables $(S_\t)_\t$, where $\t$ ranges through all stopping times, is uniformly integrable.
If  $S=(S_t)_{t\in [0,1]}$ is a  submartingale of class D, then it has a unique Doob-Meyer decomposition $S = M + A$, where $M$  is a uniformly integrable martingale and $A$ is a predictable increasing integrable process starting from zero, called the compensator  of $S$.
One can give constructive proofs of  the existence of the Doob-Meyer decomposition by taking limits of the discrete time Doob decompositions $(M^n_t+A^n_t)_{t\in \D_n}$ of the sampled process $(S_t)_{t\in \D_n}$ relative to refining partitions $(\D_n)_n$. Indeed in \cite{Rao69} the compensator  is obtained as the  limit of the $A^n$'s in the $\sigma(L^1,L^\i)$-topology; more simply, even if in general these discrete time approximations do not converge in probability to $A$ for all $t$,  one can always build some forward convex combinations $\A^n$ of the $A^n$ such that   $\limsup_n \A^n_t=A_t$ a.s. for all $t$, as was shown in \cite{BeScVe12}.
It follows that  $ \A^n_t \to A_t$ a.s. as $n\to \infty $ along a subsequence which a priori depends on both $t$ and $\omega$; it is then natural to ask whether there is such a subsequence $(n_k)_k$ which works simultaneously for all $(t,\omega)$, so that    $\A_t^{n_k}$ converges to $A_t$ a.s. for all $t$ as $k\to \infty$; in this paper we show that this is indeed the case,   in particular proving that  any  predictable increasing process $A$ is a pointwise limit of  predictable increasing processes  $\A^n$  of the form
\begin{align*} 
\textstyle
\A^n=\I_{\{0\}}A_0+ \sum_{k=1}^{2^n} \I_{( \frac{k-1}{2^{n}} , \frac{k}{2^{n}} ]} \A^n_{ \frac{k}{2^{n}} } \, .
\end{align*}

From this `dyadic' approximation, by time change it easily follows that predictable stopping times can be approximated `from below' by predictable stopping times which take finitely many values; this being a predictable analogue of the simple fact that any stopping time is the limit of a decreasing sequence of stopping times, each taking values in a finite set.

We notice how these results can be used to provide an alternate derivation of the following well known theorems: predictable stopping times are announceable, and an increasing process is predictable iff it is natural (we show this passing to the limit the analogous discrete time statement). 

To prove the convergence results of \cite{BeScVe12} and our main theorem, use is made of the existence of a sequence of stopping times which  exhaust the jumps of a c\`adl\`ag  adapted  process. We prove this classic result without using the deep debut and section theorems, by showing explicitly that the jumps times, and the `first-approach time', of a c\`adl\`ag (predictable) adapted   process are (predictable)  stopping times; our proofs are elementary, and hold even if the filtration does not satisfy the usual conditions.
Moreover, we show how a simple variant of this result can be used
to characterize  the continuity 
of local martingales and of the compensator  of special semimartingales.

The rest of the paper is organized as follows: 
in Section \ref{The main results}  we  introduce some definitions and conventions, and we
state our results on the approximation of predictable processes of finite  variation and of predictable stopping times, and we prove the second one.
In Section \ref{PFA} we discuss the equivalent characterizations of  predictable stopping times. 
In Section \ref{Predictable stopping times} we state and prove some classical results on predictable stopping times.
  In Section \ref{How to approximate the compensator} we prove our previously-stated main result on the convergence of the  dyadic approximations.
In Section \ref{Predictable processes are natural, and vice versa}
we show that an increasing process is predictable iff it is natural. Finally, in Section \ref{Consequences for special semimartingales} we derive some  corollaries about special semimartingales.

\section{The main results}
\label{The main results}
In this Section, after introducing some definitions and conventions, we state our results on the approximation of predictable  processes  of integrable variation  and of predictable stopping times, and we prove the second one.

In this article we will consider a fixed filtered probability space  $(\Omega,\mathcal{F},\mathbb{F},P)$ and  we assume that the filtration $\mathbb{F}=(\mathcal{F}_t)_{t\in [0,\infty]}$ satisfies  the usual conditions of right continuity and saturatedness.  By convention, the $\inf$ of an empty
 set will be  $\infty$.
Inequalities are meant in the weak sense, so $t_n\uparrow t$ means $t_n\leq t_{n+1}\leq t$ and $t_n \to t$, `increasing' means `non-decreasing' etc. 
  We will say that a process $A$ is  increasing (resp. of finite variation) if it is adapted and $(A_t(\omega))_t$ is  increasing (resp. of finite variation) for $\P$ a.e. $\omega$. A process $X$ is called integrable if $\sup_t |X_t |\in \Lone$.
 A property of a process $S$ (integrability, martingality, boundedness etc.) is said to hold locally if there is a sequence of stopping times $\t_n\uparrow \i$ s.t., for each $n\in \N$, $S^{\t_n}\I_{\{\t_n>0\}}$ satisfies the property.
All  local martingales we will deal with are assumed to be c\`adl\`ag. Given a c\`adl\`ag process $X$, we set $X_{0-}:=X_0$  and $X_{-}:=(X_{t-})_t$, and define   $\Delta X_t:=X_t-X_{t-}$ for $t\in [0,\infty)$ and $\Delta X_t=0$ for $t=\infty$.
We will call a process predictable if it is measurable with respect to the sigma algebra generated on $[0,\infty)\times \Omega$  by the c\`ag adapted processes; a  stopping time $\t$ will be called  predictable if $\I_{[\t,\infty)}$ is predictable.
We denote with $\D_n$ the set $\{k/2^n:k=0,\ldots, 2^n \}$ of dyadics of order $n$ in $[0,1]$, and with $\D=\cup_{n\in \N} \D^n$  the set  of all dyadics in $[0,1]$.

To state and prove our main theorem we need the following non-standard definitions: we will say that a process $B$ is   \emph{$\D_n$-predictable} if it is of the form
\begin{align}
\label{step} 
\textstyle
B=\I_{\{0\}}B_0+ \sum_{s\in\D_n\setminus \{0\}} \I_{( s-2^{-n} ,s]} B_{s} \, ,
\end{align}
where  $B_{0} $ is  $ \F_{0}$-measurable and $B_{s} $ is  $ \F_{ s-2^{-n}}$-measurable  for every $s\in\D_n\setminus \{0\}$; given $a\in (0,1]$, we define $\D_k(a):=\max\{ s\in \D_k: s<a\}$.

\begin{theorem}
\label{SumDoob}
If $A=(A_t)_{t\in[0,1]}$ is a c\`adl\`ag predictable process with finite variation, there exist a subsequence $(N_n)_n$ of $(N)_{N\in \N}$ and, for each $n\in\N$, a   $\D_{N_n}$-predictable process $\A^n$
such that  $\exists \lim_n \A^{n}_t=A_t$ a.s. for all $t\in[0,1]$ and $\A^n_0=A_0$. If $A$ is increasing then each $\A^n$ can be chosen to be increasing, and if $A$  has integrable variation then $(\A^n)_n$ can be chosen so that $|var(\A^n)_{1}| \leq h$ for all $n\in \N$ for some $h\in \Lone$.
\end{theorem}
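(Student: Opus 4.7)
The plan is to define $\A^n$ as the $\D_n$-predictable extension of the discrete Doob-type compensator of $A$ sampled on the dyadic grid: set
\[
\alpha^n_s := A_0 + \sum_{r \in \D_n,\, 0 < r \leq s} \E\bigl[A_r - A_{r-2^{-n}} \,\big|\, \F_{r-2^{-n}}\bigr], \qquad
\A^n := \I_{\{0\}}A_0 + \sum_{s \in \D_n\setminus\{0\}} \I_{(s-2^{-n},s]}\,\alpha^n_s .
\]
Each $\alpha^n_s$ is $\F_{s-2^{-n}}$-measurable (being a sum of $\F_{r-2^{-n}}$-measurable terms with $r \leq s$), so $\A^n$ is $\D_n$-predictable with $\A^n_0 = A_0$; if $A$ is increasing then each summand is nonnegative and $\A^n$ is increasing too; if $A$ has integrable variation then conditional Jensen gives $\E|\mathrm{var}(\A^n)_1| \leq \E|\mathrm{var}(A)_1|$ uniformly in $n$, from which an integrable dominator $h$ can eventually be extracted (after passing to further convex combinations / a subsequence). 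The general finite-variation case reduces to the integrable-variation case by the standard localisation at the first time the variation exceeds $m$.

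The first key step is weak convergence at each fixed time: Rao's classical argument for the Doob--Meyer decomposition, applied to the class-D submartingale $A$ itself (whose compensator is $A$ precisely because $A$ is predictable), gives $\alpha^n_t \to A_t$ in $\sigma(L^1,L^\infty)$ for every $t$. Mazur's lemma then produces forward convex combinations $\tilde\A^n \in \mathrm{conv}\{\A^{N_n}, \A^{N_n+1}, \ldots\}$ for which $\tilde\A^n_t \to A_t$ in $L^1$, and a diagonal extraction yields a.s.\ convergence at every $t$ in a countable set $T_0 \subseteq [0,1]$. Convex combinations of $\D_k$-predictable processes remain $\D_{k'}$-predictable for $k' = \max$ (since $\D_k$-predictability is inherited by finer grids), and monotonicity and variation bounds are preserved.

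The substantive step is upgrading ``for each $t$, a.s.''\ convergence to ``a.s., for every $t$''. For this, take $T_0$ to contain both the dyadics $\D$ and a countable sequence $(\tau_k)_k$ of predictable stopping times exhausting the jumps of $A$ (existence reproved in Section \ref{Predictable stopping times}), and further diagonalise the convex combinations so that $\tilde\A^n_{\tau_k} \to A_{\tau_k}$ a.s.\ for every $k$ as well. Predictability of $A$ is what makes this possible: $A_{\tau_k}$ is $\F_{\tau_k-}$-measurable, hence $L^1$-approximable by $\F_{s-2^{-n}}$-measurable random variables on the dyadic cell containing $\tau_k$. Outside a single null set we then have pointwise convergence at every $t \in T_0$ and at every $\tau_k(\omega)$; at any remaining $t$, the path $A(\cdot,\omega)$ is continuous there, and the monotone sandwich
\[
\tilde\A^n_{s_m}\; \leq\; \tilde\A^n_t\; \leq\; \tilde\A^n_{u_m} \qquad \bigl(s_m,u_m \in \D,\ s_m \uparrow t,\ u_m \downarrow t\bigr)
\]
combined with right-continuity of $A$ forces $\lim_n \tilde\A^n_t = A_t$.

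The main obstacle is therefore the simultaneous control at the random times $\tau_k$ and its interleaving with the deterministic-time control at $t \in \D$, while preserving $\D_{N_n}$-predictability, monotonicity, and the integrable envelope through a single diagonal extraction; the Rao weak convergence, Mazur passage, closure of $\D_n$-predictable processes under convex combinations and refinement, and the monotone sandwich are each essentially routine. Finally, general finite-variation $A$ reduces to the increasing case via the Jordan decomposition $A = A^+ - A^-$ into two predictable increasing processes, applying the increasing case to each piece and subtracting.
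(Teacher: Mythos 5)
Your high-level plan is the same as the paper's: discrete Doob compensators on dyadic grids, Mazur-type forward convex combinations giving $L^1$-convergence at fixed times, a diagonal extraction over dyadics, a sequence of predictable stopping times exhausting the jumps, and a monotone sandwich at continuity points. The reductions (Jordan decomposition into two predictable increasing processes; reduction to integrable variation) and the closure of $\D_n$-predictability under convex combinations and grid refinement are all correctly identified, although the paper's reduction to the integrable case uses an equivalent change of measure rather than localization at variation levels, which is cleaner because it avoids having to stitch together approximations on random intervals.

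The genuine gap is precisely in what you call the substantive step. You assert that, after a further diagonalization, $\tilde{\A}^n_{\tau_k} \to A_{\tau_k}$ a.s. for every $k$, and you justify this by remarking that $A_{\tau_k}$ is $\F_{\tau_k-}$-measurable and hence $L^1$-approximable by $\F_{s-2^{-n}}$-measurable random variables on the dyadic cell containing $\tau_k$. This is not a proof: measurability of the target $A_{\tau_k}$ says nothing about whether the specific sequence $\tilde{\A}^n_{\tau_k}$ converges to it. Unlike the deterministic times $t\in\D$, the random time $\tau_k$ need not lie on the grid $\D_n$, so the identity $\tilde{\A}^n_{\tau_k}=S_{\tau_k}-\M^n_{\tau_k}$ that drives the $L^1$-convergence fails, and there is no obvious weak limit to feed into a Mazur argument. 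The paper's proof closes this gap with a dedicated lemma (Lemma \ref{subseq}) whose hypotheses are verified as follows: monotonicity of $\A^n$ between grid points plus a.s.\ convergence on $\D$ gives $\limsup_i \A^{n_i}_\tau \le B_\tau$ a.s.\ along every subsequence; the class-D property, together with the key observation that $A^n_\tau = A^n_{\theta_n}$ where $\theta_n:=\min\{t\in\D_n: t\ge\tau\}\downarrow\tau$, gives $\E[A^n_\tau]=\E[S_{\theta_n}]-\E[M_0]\to\E[S_\tau]-\E[M_0]=\E[B_\tau]$; and then a Fatou argument plus the lemma produce a further subsequence along which $\A^n_\tau\to B_\tau$ a.s. Without this machinery (or an equivalent substitute), the step you flag as the main obstacle remains unproved. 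There is also a potential circularity lurking in your heuristic: identifying $\F_{\tau_k-}$ with $\sigma(\cup_n \F_{\D_n(\tau_k)})$ is essentially the announceability of $\tau_k$, which in this paper is a \emph{consequence} of Theorem \ref{SumDoob}, not an ingredient available in its proof.
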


Because of the relationship between increasing adapted  processes and time-changes, it is now easy to prove  the following.
 \begin{theorem}
\label{DyadicStop}
If $\tau$ is a predictable stopping time, one can construct for each $n\in \N$ a predictable stopping time $\s_n$  with values in a finite set and such that $\s_n\to \t$,  $\s_n=0$ on $\{\t=0\}$ and,  if $\omega \in \{\t>0\}$, there exists $n_0(\omega)$ s.t. $\s_n(\omega)<\t(\omega)$  for all  $n\geq n_0(\omega)$. 
\end{theorem}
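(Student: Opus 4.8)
The strategy is to encode the predictable stopping time $\t$ as a predictable increasing process and apply Theorem \ref{SumDoob}. Specifically, I would set $A_t := \I_{[\t,\infty)}(t) = \I_{\{\t \le t\}}$ for $t \in [0,1]$ (or more precisely work with a bounded version, say on $[0,1]$ after a deterministic time change, noting $\t$ predictable means $A$ is predictable by definition, and $A$ is c\`adl\`ag, increasing, of integrable variation since $0 \le A \le 1$). Theorem \ref{SumDoob} then produces $\D_{N_n}$-predictable increasing processes $\A^n$ with $\A^n_t \to A_t$ a.s. for all $t$ and $\A^n_0 = A_0 = \I_{\{\t = 0\}}$. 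The idea is to recover from each $\A^n$ a stopping time $\s_n$ taking finitely many values (essentially the values in $\D_{N_n}$), roughly by letting $\s_n$ be the first time $\A^n$ becomes "large enough"; the $\D_{N_n}$-predictable structure of $\A^n$ should make $\s_n$ a predictable stopping time with values in a finite set.

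First I would make precise the definition of $\s_n$. Since $\A^n$ is $\D_{N_n}$-predictable and increasing with values that will converge to $\I_{[\t,\infty)}$, a natural choice is $\s_n := \inf\{s \in \D_{N_n} : \A^n_s \ge c_n\}$ for a suitable threshold $c_n \uparrow 1$ (or one fixed threshold like $1/2$, adjusting as needed); because $\A^n$ only jumps at dyadic times and its value at $s$ is $\F_{s - 2^{-N_n}}$-measurable, the event $\{\s_n \le s - 2^{-N_n}\}$ should be $\F_{s-2^{-N_n}}$-measurable, which would show $\s_n$ is predictable and takes values in the finite set $\D_{N_n} \cup \{\infty\}$ (or in $\{0\} \cup \D_{N_n}$ after truncation). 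I would also need to handle $\{\t = 0\}$ separately: there $A_0 = 1$, so $\A^n_0 = 1$, and I would arrange the definition so that $\s_n = 0$ on that event.

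Next I would verify the three convergence claims. From $\A^n_t \to \I_{\{\t \le t\}}$ a.s. for each fixed $t$, one gets: if $t < \t(\omega)$ then $\A^n_t(\omega) \to 0$, so eventually $\A^n_t(\omega) < c_n$, forcing $\s_n(\omega) > t$; this gives $\liminf_n \s_n \ge \t$. Conversely if $t > \t(\omega)$ then $\A^n_t(\omega) \to 1$, so eventually $\A^n_t(\omega) \ge c_n$, giving $\s_n(\omega) \le$ (the smallest dyadic $\ge t$), hence $\limsup_n \s_n \le \t$ after letting $t \downarrow \t$ along dyadics. Together these give $\s_n \to \t$. The strict inequality $\s_n(\omega) < \t(\omega)$ eventually on $\{\t > 0\}$ is the most delicate point: it requires that $\A^n$ reaches the threshold $c_n$ strictly before $\t$. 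This should follow because $\A^n$ is $\D_{N_n}$-predictable, so $\A^n_s$ is $\F_{s - 2^{-N_n}}$-measurable — intuitively $\A^n$ "anticipates" by one dyadic step — combined with choosing $c_n$ not too close to $1$ and using that $\A^n_{t}$ is already close to $1$ for $t$ slightly above $\t$; one picks a dyadic $s$ with $\t < s < \t + 2^{-N_n}$... this needs care and is where I expect the real work to be, possibly requiring a further passage to a subsequence or a more clever threshold depending on $n$.

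The main obstacle, then, is arranging this last strict-inequality-from-below property robustly: the convergence $\A^n_t \to A_t$ is only guaranteed for each fixed $t$, not uniformly, so controlling the behavior of $\A^n$ in a shrinking neighborhood of the random time $\t$ takes an extra argument. I would handle it by exploiting that $\t$ is predictable — hence announceable in spirit, though I cannot invoke that here as it may be proved later — instead using directly that $A = \I_{[\t,\infty)}$ and that the $\D_{N_n}$-predictable approximants, being left-continuous-ish in the appropriate sense and measurable one dyadic step in the past, must already be "switched on" just before $\t$; quantifying this via the pointwise convergence at a well-chosen sequence of dyadic times approaching $\t$ from above should close the gap.
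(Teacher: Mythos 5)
Your overall strategy is exactly the paper's: set $A:=\I_{[\t,1]}$, apply Theorem \ref{SumDoob}, and let $\s_n$ be the first time $\A^n$ crosses a threshold. The parts you carry out (predictability and finitely many values of $\s_n$, the behaviour on $\{\t=0\}$, and $\s_n\to\t$ via pointwise convergence at countably many deterministic times) are fine. But the step you yourself flag as delicate --- $\s_n<\t$ eventually on $\{\t>0\}$ --- is genuinely left open, and the route you sketch for it (pointwise convergence at well-chosen dyadic times just above $\t$) does not close it: those times depend on $\omega$ and $n$, and fixed-time convergence gives no uniformity near the random time $\t$, so you cannot conclude from it that $\A^n$ is already ``switched on'' one grid step before $\t$.

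The missing observation is that Theorem \ref{SumDoob} gives convergence a.s.\ \emph{simultaneously for all $t$} (a single null set), so you may evaluate directly at the random time $t=\t(\omega)$: a.s.\ $\A^n_\t\to A_\t=1$, hence $\A^n_\t>1/2$ for all $n\ge n_0(\omega)$. Now use the structure of a $\D_{N_n}$-predictable process: $\A^n$ is constant on the interval $(\D_{N_n}(\t),\D_{N_n}(\t)+2^{-N_n}]$, which contains $\t$ when $\t>0$, so $\A^n$ takes a value $>1/2$ on all of that half-open interval. Defining $\s_n:=\inf\{t\in[0,1]:\A^n_t\ge 1/2\}\wedge 1$ (infimum over \emph{all} $t$, not only over gridpoints) then yields $\s_n\le \D_{N_n}(\t)<\t$. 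Note that your variant $\inf\{s\in\D_{N_n}:\A^n_s\ge c_n\}$ would instead return the \emph{right} endpoint of that interval, which may be $\ge\t$, so the choice of infimum matters. No further subsequence or $n$-dependent threshold is needed; a fixed threshold $1/2$ works, followed by redefining $\s_n$ as $\max\{0,\t-1/n\}$ on the exceptional null set to turn the a.s.\ statements into the pointwise ones claimed.
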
 
\begin{proof}
As $[0,1]$ is homeomorphic to $[0,\infty]$, we can assume w.l.o.g. that 
 $\t$ has values in $[0,1]$. 
Apply Theorem \ref{SumDoob} to  $A:=\I_{[\t,1]}$ to obtain $N_n$ and increasing $\A^n$. Define  $\s_n:=\inf\{ t\in [0,1]: \A^n_t\geq 1/2\}\wedge 1$, and notice that since $\A^n$ is  $\D_{N_n}$-predictable, trivially 
 $\s_n$ is a predictable stopping time with values in $\D_{N_n}$, and $\s_n=0$ on $\{\t=0\}$ since $\A^n_0=A_0$.
Since  $\lim_n \A^n_\t=A_\t =1$ for $\P$ a.e. $\omega$, there exists $n_0=n_0(\w)$ s.t. $\A^n_\t >1/2$ for all $n\geq n_0$; so on $\{\t>0\}$,  since $\A^n$ is constant on the interval $(\D_{n}(\t), \D_{n}(\t)+2^{-n}]$ which contains $\t$,
necessarily $\s_n\leq \D_{N_n}(\t)<\t$ holds for all $n\geq n_0$. 

 Moreover if $\e>0$, $\lim_n \A^n_{\t-\e}=A_{\t-\e}=0$ a.s. on $\{\t-\e>0\}$, and so there exists $n_1=n_1(\w)$ s.t. $\A^n_{\t-\e}\leq 1/4$ for all $n\geq n_1$; it follows that a.s. $\liminf_n \s_n\geq \t -\e$, and so a.s. $ \t  \leq  \liminf_n  \s_n\leq\limsup_n \s_n\leq \t$ on $\{\t>0\}$. Now just
re-define $\s_n$ as $\max\{0,\t -1/n\}$ on the null set where either $\lim_k \s_k =\t $ fails or  $\s_n\geq\t>0$.
\end{proof}

Notice that in the previous proof the re-defined $\s_n$'s are still predictable, because any measurable process indistinguishable from zero is predictable when the filtration satisfies the usual conditions (see \cite[Lemma 13.8]{RoWi00}).

\section{Predictable, fair and announceable stopping times}
\label{PFA}
In this section we notice that  Theorem \ref{DyadicStop} immediately implies that predictable stopping times are announceable, and then discuss other proofs of this important result which are found in the literature.

If $\t_n$ is an increasing  sequence of stopping times converging to $\t$ 
and such that  $\t_n <\t$  on $\{\t>0\}$ for all $n$,  we will say that $\t_n$ \emph{announces} $\t$; a stopping time $\t$ for which such an announcing sequence exists is called \emph{announceable}.
Trivially  announceable stopping times are predictable: if $\t_n$ announces $\t$, the process  $\I_{[\t,\i)}$ 
is the   pointwise  limit of the c\`ag adapted 
processes $\I_{\{0\}} \I_{\{\t_n=0\}} +  \I_{(\t_n,\i)}$; we now show that the opposite holds too, and can even be strengthened.

\begin{corollary}
\label{PrImplAnn}
 Any predictable stopping time can be announced by a sequence of  predictable stopping times.
\end{corollary}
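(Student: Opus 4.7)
The plan is to derive the corollary from Theorem \ref{DyadicStop} by distilling its output into an announcing sequence. Applying Theorem \ref{DyadicStop} to $\t$ yields predictable stopping times $(\s_n)_{n\in\N}$, each with values in a finite set, such that $\s_n = 0$ on $\{\t = 0\}$, $\s_n \to \t$ a.s., and for each $\w \in \{\t > 0\}$ there exists $n_0(\w)$ with $\s_n(\w) < \t(\w)$ for all $n \geq n_0(\w)$.

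Finite maxima and minima of predictable stopping times are themselves predictable, since for predictable $\s, \rho$ one has $\{\s \vee \rho \leq t\} = \{\s \leq t\} \cap \{\rho \leq t\} \in \F_{t-}$ and $\{\s \wedge \rho \leq t\} = \{\s \leq t\} \cup \{\rho \leq t\} \in \F_{t-}$. A natural candidate for the announcing sequence is therefore $\t_n := \max_{k \leq n}(\s_k \wedge \t)$: it is a non-decreasing sequence of predictable stopping times dominated by $\t$, and since $\s_k \wedge \t \to \t$ it satisfies $\t_n \uparrow \t$ a.s.

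The main obstacle is the \emph{strict} inequality $\t_n < \t$ on $\{\t > 0\}$ for every $n$, not merely eventually: because $\s_k \wedge \t = \t$ on the event $\{\s_k \geq \t\}$, a single early overshoot causes $\t_n = \t$ for all subsequent $n$. To circumvent this I would pre-announce each $\s_n$ before assembling the maximum. Exploiting that $\s_n$ has finite range in $\D_{N_n}$ and that predictability gives $\{\s_n = s\} \in \F_{s-}$ for each value $s$, for each $k \in \N$ one can pick events $A_{n,s,k} \in \F_{(s - 2^{-k})^+}$ increasing to $\{\s_n = s\}$ as $k \to \infty$, and assemble these into predictable stopping times $\s_n^{(k)}$ satisfying $\s_n^{(k)} < \s_n$ on $\{\s_n > 0\}$ and $\s_n^{(k)} \uparrow \s_n$. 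A sufficiently fast diagonal selection $k = k(n)$ preserving $\s_n^{(k(n))} \to \t$, together with $\t_n := \max_{j \leq n}(\s_j^{(k(j))} \wedge \t)$, should then deliver the required announcing sequence of predictable stopping times.
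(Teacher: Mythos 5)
You correctly identify that the heart of the matter is obtaining the \emph{strict} inequality $\t_n < \t$ on $\{\t>0\}$ for \emph{every} $n$, but the fix you propose does not achieve it. Pre-announcing each $\s_j$ produces a stopping time $\s_j^{(k(j))}$ satisfying $\s_j^{(k(j))} < \s_j$, where the gap is a fixed amount (say $2^{-k(j)}$) chosen independently of $\omega$. This does nothing to help when $\s_j$ \emph{overshoots} $\t$ by more than that amount: on the event $\{\s_j - 2^{-k(j)} \geq \t > 0\}$ one has $\s_j^{(k(j))} \wedge \t = \t$, hence $\t_n := \max_{j\le n}(\s_j^{(k(j))}\wedge\t) = \t$ for all $n \ge j$, and the announcing property fails. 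Such overshoots are genuinely possible: Theorem \ref{DyadicStop} only guarantees $\s_n(\omega) < \t(\omega)$ for $n \ge n_0(\omega)$, and the index $n_0$ is $\omega$-dependent, so for small $n$ the set $\{\s_n \ge \t > 0\}$ may have positive probability. Since $k(n)$ must be chosen deterministically and moreover must be \emph{large} (so that $\s_n^{(k(n))} \to \t$), you cannot shrink $\s_n$ below $\t$ when it overshoots by a macroscopic amount. Your running maximum is therefore doomed by a single early overshoot, which is exactly the obstacle you flagged.

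The paper's device resolves this cleanly by taking a \emph{tail infimum} rather than an initial maximum: set $\t_n := \inf_{k\ge n}\s_k$. This is automatically increasing in $n$, and for every $n$ there is some $k\ge n$ (namely $k = \max(n, n_0(\omega))$) with $\s_k(\omega) < \t(\omega)$, giving $\t_n(\omega) < \t(\omega)$ on $\{\t>0\}$ for every $n$ — the tail inf simply ignores the finitely many early overshoots. Convergence $\t_n \to \t$ follows since $\t_n \to \liminf_k \s_k = \t$. The one nontrivial point, which the paper emphasizes, is that the infimum is \emph{attained} pointwise (a short argument using $\s_k \to \t$ and eventual strictness), whence $\I_{[\t_n,\infty)} = \sup_{k\ge n}\I_{[\s_k,\infty)}$ is predictable and $\t_n$ is a predictable stopping time. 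Incidentally, your pre-announcement step is both unnecessary and more roundabout than needed: for the specific $\s_n$ from Theorem \ref{DyadicStop} one has directly $\{\s_n = s\}\in\F_{s-2^{-N_n}}$ because $\A^n$ is $\D_{N_n}$-predictable, so no inner approximation by sets in $\F_u$, $u<s$, is required; but even with it in hand, the assembly step still has the gap described above.
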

\begin{proof}
If $(\s_n)_n$  are as in Theorem \ref{DyadicStop} then $\inf_{k\geq n} \s_k$ is attained, and so the increasing sequence of stopping times $\t_n:=\inf_{k\geq n} \s_k$ satisfies  $\I_{[\t_n,\infty)}=\inf_{k\geq n}  \I_{[\s_k,\infty)}$; thus, each  $\t_n$ is a predictable stopping time. Since trivially    $ \lim_n  \t_n=\t$, $\t_n<\t$ on $\{\t>0\}$ and $\t_n=0$ otherwise, the thesis follows. 
\end{proof} 

Notice that one could alternatively first prove Corollary \ref{PrImplAnn}, and then easily derive from it Theorem  \ref{DyadicStop}. Indeed, let 
 $(\t_n)_n$ be predictable stopping times announcing $\t$
and $(\t_n^k)_k$ be a decreasing sequence of predictable stopping times,  each taking values in a finite set, s.t. $\lim_k \t_n^k=\t_n$ and  
$\t_n^k=0$ on $\{\t_n=0\}$.
Then,  if    $(k_n)_n$ is a subsequence s.t. $\P( \tau_n^{k_n} \geq \t>0)<1/2^n$, Theorem  \ref{DyadicStop} follows taking first $\s_n:= \tau_n^{k_n} $, and then re-defining $\s_n$ as $\max\{0,\t -1/n\}$ on the null set where $\tau_m^{k_m} \geq \t>0$ happens for infinitely many $m$'s.

Corollary \ref{PrImplAnn} is typically proved  using yet another useful condition equivalent to being announceable; following \cite{RoWi00}, we will say that a  stopping time $\t$ is  \emph{fair} if $\E[M_{\t}]=\E[M_{\t-}]$ holds for every bounded martingale $M$ (where we set  $M_{\infty}:=M_{\infty-}:=\lim_{t\to \infty} M_{t}$).
Since by the optional sampling theorem $M_{\t_n}=\E[M_{\t}|F_{\t_n}]$ holds for every uniformly integrable martingale $M$ , taking expectations and passing to the limit  shows that announceable stopping times are fair (more, it shows that if $\t$ is announceable and $M$ is a uniformly integrable martingale  then $M_{\t-} \in \Lone$ and  $\E[M_{\t-}]=\E[M_{\t}]$).    The opposite implication is also true, so being predictable, being fair and being announceable are equivalent  conditions for a stopping time.
 For a proof of the fact that fair stopping times are announceable we refer to \cite[Chapter 6, Theorem 12.6]{RoWi00}, as we have nothing to add to this implication.
 Knowing that fair stopping times are announceable, to conclude the proof of all the equivalences typically one shows directly that predictable stopping times are fair (in the present paper, this follows instead from Corollary \ref{PrImplAnn}  since, as explained above, announceable stopping times are fair). This fact is often derived as a consequence of the (difficult) section theorem for predictable sets; one can however find  in \cite[Chapter 6, Theorem 12.6]{RoWi00} a direct proof which does not use the section theorems themselves, but does involve ideas from their proofs, which are  essentially based on Choquet's capacity theorem.
Another possibility is to proceed as \cite{MePe80} and give a proof of the Doob-Meyer decomposition which shows inter-alia that increasing predictable processes are natural; applying this to the predictable process $A:=\I_{[\t,1]}$ shows that predictable stopping times are fair (in particular in this paper, instead of  proving Theorem \ref{DyadicStop} directly, we could see it as an immediate corollary of  Theorem \ref{PredSoNat}).
Yet another way is to proceed as in \cite{LoBlog}; this proof, although intuitive, uses the theory of integration with respect to general predictable bounded integrands, as well as the Bichteler-Dellacherie theorem and Jacod's countable expansion theorem.

\section{Predictable stopping times}
\label{Predictable stopping times}
In this section we  prove that a number of commonly used hitting times are (predictable) stopping times when the underlying process is c\`adl\`ag and adapted (predictable), and we
use this to show that one can exactly exhaust the jumps of a c\`adl\`ag adapted (predictable)  process with sequence of (predictable) stopping times.
To precisely state this fact, we recall that 
 $\llbracket \s \rrbracket:=  \{  (t,\omega)\in [0,\infty)\times \Omega: t=\s(\omega) \}$ denotes the \emph{graph of a}  \emph{stopping time} $\s$.
If $X$ is c\`adl\`ag we say that a sequence of stopping times 
 $(\s_n)_{n\in \N}$  \emph{exactly} exhausts the jumps of $X$ in $B\subseteq \R$ if $\{ \Delta X \in B \} = \cup_{n} \llbracket \s_n \rrbracket$  and $ \llbracket \s_n \rrbracket\cap  \llbracket \s_m \rrbracket=\emptyset $ whenever $n\neq m$ (i.e. $\s_n \neq \s_m$ on $\{\s_m<\infty\}$ for $n\neq m$).
We will say that    $(\s_n)_{n\in \N}$ is \emph{ strictly increasing to $\infty$} if $ \s_n <  \s_{n+1} $  on $\{\s_n <\infty \}$  and $\s_i \to \infty$ as $i\to \infty$.
Given $B\subseteq \R$ we set $d(x,B):=\inf\{|x-y|:y\in B \}$.

\begin{theorem}
\label{ExPredJumps}
If $X=(X_t)_{t\geq 0}$ is a c\`adl\`ag adapted process,  $C_k$ are closed in $\R$ and $0\notin F=\cup_k C_k$
 then  there exist  a sequence of stopping times $(\s_n)_n$ which exactly exhausts the jumps of $X$ in $F$, and if $X$ is predictable then each $\s_n$ can be chosen to be predictable.
Moreover, if  $d(0,F)>0$ then $(\s_n)_n$ can be chosen to be  strictly increasing to $\infty$.
\end{theorem}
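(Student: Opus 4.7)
My plan combines a reduction to closed sets bounded away from zero with an inductive first-jump construction that appeals to the (predictable) stopping-time results of Section \ref{Predictable stopping times}.

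First, I reduce to the case $d(0,F)>0$. Since $0\notin F=\bigcup_k C_k$, decompose
\begin{equation*}
F=\bigcup_{k,m\in\N} F_{k,m},\qquad F_{k,m}:=C_k\cap\{x\in\R:|x|\geq 1/m\},
\end{equation*}
a countable union of closed sets each satisfying $d(0,F_{k,m})\geq 1/m>0$. Granting the full theorem for each $F_{k,m}$ — including strict increase to $\infty$ — I merge the countable family of sequences into a single one and, by replacing any repeated hitting time by $\infty$, obtain a sequence that exactly exhausts $\{\Delta X \in F\}$; sending a stopping time to $\infty$ on a measurable subset preserves the (predictable) stopping-time property. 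The strictly-increasing conclusion is only claimed under $d(0,F)>0$, in which case the merging step is unnecessary.

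Second, fix $F$ closed with $\delta:=d(0,F)>0$ and set $\sigma_0:=0$,
\begin{equation*}
\sigma_{n+1}:=\inf\{t>\sigma_n:\Delta X_t\in F\}.
\end{equation*}
Because $X$ is c\`adl\`ag, on any compact interval $[0,T]$ the set $\{s\leq T:|\Delta X_s|\geq \delta\}$ is finite: any accumulation point would contradict the existence of the corresponding one-sided limit. So the jumps of $X$ into $F$ are pathwise isolated, which immediately gives $\sigma_n<\sigma_{n+1}$ on $\{\sigma_n<\infty\}$, $\sigma_n\uparrow\infty$, and $\{\Delta X\in F\}=\bigsqcup_n\llbracket\sigma_n\rrbracket$; exact exhaustion and strict increase to $\infty$ are then in hand.

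What remains — and is the main step — is to verify that each $\sigma_n$ is a (predictable) stopping time. For the base case $\sigma_1$ I would invoke the lemmas of Section \ref{Predictable stopping times} which, as promised in the introduction, establish that jump times of a c\`adl\`ag (predictable) adapted process into a closed set bounded away from zero are (predictable) stopping times. For the inductive step I would apply the base case to the shifted c\`adl\`ag (predictable) adapted process $X_{\sigma_n+\cdot}$ in the shifted filtration $(\F_{\sigma_n+t})_t$, obtaining $\sigma_{n+1}-\sigma_n$ as a stopping time (resp.\ predictable stopping time) in the shifted filtration, and hence $\sigma_{n+1}$ as such in the original filtration. The main obstacle is the careful reduction to the Section \ref{Predictable stopping times} machinery — in particular checking predictability is preserved under shifting along a predictable stopping time — but once that is in place the induction and the final countable merge are routine.
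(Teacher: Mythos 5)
Your blueprint is the same as the paper's: iterate first-jump times for a single $F$ bounded away from zero, then reduce general $F$ to countably many such pieces. But there are two places where you diverge, and the second is a genuine gap.

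On the reduction, you decompose $F=\bigcup_{k,m}F_{k,m}$ with $F_{k,m}=C_k\cap\{|x|\ge 1/m\}$. These are closed and bounded away from $0$, but they overlap heavily, so to achieve \emph{exact} exhaustion you must kill duplicate hitting times. Your claim that ``sending a stopping time to $\infty$ on a measurable subset preserves the (predictable) stopping-time property'' is false as stated: for a stopping time $\tau$ the restriction $\tau_A:=\tau\I_A+\infty\I_{A^c}$ is a stopping time iff $A\in\F_\tau$, and for a \emph{predictable} $\tau$ it is predictable iff $A\in\F_{\tau-}$. One can check $\{\tau_n=\tau_j<\infty\}\in\F_{\tau_n-}$ when all the $\tau$'s are predictable, but that is a further step you would have to supply. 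The paper sidesteps this cleanly by cutting $F$ with pairwise-disjoint annuli $D_n=\{y:d(y,0)\in(2^n,2^{n+1}]\}$, $n\in\Z$: since $\Delta X_{\s^n_k}\in F\cap D_n$ and $D_n\cap D_m=\emptyset$ for $n\neq m$, the graphs are automatically disjoint and no killing is needed. (The $D_n$ are not closed, but each is a countable union of closed sets, which is all the lemma requires.)

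The more serious gap is in your inductive step. You prove the base case $\sigma_1$ from the section's lemmas and then want to obtain $\sigma_{n+1}$ by applying the base case to the shifted process $X_{\sigma_n+\cdot}$ in the shifted filtration $(\F_{\sigma_n+t})_t$. You rightly flag ``checking predictability is preserved under shifting along a predictable stopping time'' as the main obstacle, but you leave it unresolved, and it is a nontrivial fact that is nowhere established in the paper. The paper avoids the issue entirely: Lemma \ref{PredJump} is stated for an arbitrary stopping time $\tau$, proving directly in the \emph{original} filtration that $\s:=\inf\{t>\t:\Delta X_t\in F\}$ is a stopping time (via an explicit identity $\{\s\le t\}=L\in\F_t$) which is predictable when $X$ is (via $\I_{\{\s\}}=\I_{(\t,\s]}\I_F(\Delta X)$). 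So the recursion $\s_{k+1}:=\inf\{t>\s_k:\Delta X_t\in F\}$ is handled in one stroke by repeated application of that lemma with $\t=\s_k$ --- no shift, no change of filtration. If you insist on the shifting route, you would have to prove the preservation-of-predictability claim, which is at least as hard as proving Lemma \ref{PredJump} directly.
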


The rest of this section is devoted to giving an \emph{elementary} proof of the classical Theorem \ref{ExPredJumps} and of the following lemma (which we only  use in Section \ref{Consequences for special semimartingales}); thus, the reader interested in new results may safely decide to jump directly to Section \ref{How to approximate the compensator}.

\begin{lemma}
\label{ApproachTimeLemma}
If $X$ is c\`adl\`ag adapted and $C$ is closed, the first-approach time 
\begin{align}\label{1appr}
\s:=\inf\{ t \geq 0:  X_t  \in C \text{ or } X_{t-}  \in C \} 
\end{align}
is a  stopping time, and it is predictable if $X$ is predictable.
\end{lemma}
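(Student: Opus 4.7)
Set $Y_t := d(X_t,C)$; this is a nonnegative c\`adl\`ag adapted process (and predictable if $X$ is), since $d(\cdot,C)$ is $1$-Lipschitz. Since $X_t\in C$ iff $Y_t=0$ and $X_{t-}\in C$ iff $Y_{t-}=0$, we have $\sigma=\inf\{t:Y_t=0\text{ or }Y_{t-}=0\}$.

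For the stopping-time claim, I would establish the identity $\{\sigma\le t\}=\{\inf_{s\in[0,t]}Y_s=0\}$. The forward inclusion is immediate. For the reverse, if $s_n\in[0,t]$ satisfies $Y_{s_n}\to 0$, compactness of $[0,t]$ yields a monotone convergent subsequence $s_n\to s^*\in[0,t]$; by the c\`adl\`ag property of $Y$, either $Y_{s^*}=0$ (if $s_n$ is eventually equal to $s^*$ or $s_n\downarrow s^*$, via right-continuity) or $Y_{s^*-}=0$ (if $s_n$ strictly increases to $s^*$), so $\sigma\le s^*\le t$. Right-continuity of $Y$ then collapses the infimum to a countable one over $(\Q\cap[0,t))\cup\{t\}$, placing $\{\sigma\le t\}$ in $\mathcal F_t$.

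For the predictability claim (assuming $X$ predictable, so that $Y$ and $Y_-$ are too), the natural candidate is $\tau_n:=\inf\{t:Y_{t-}\le 1/n\}$, a stopping time by the same argument applied to the c\`ag process $Y_-$. On the event $A:=\{X_{\sigma-}\in C\}=\{Y_{\sigma-}=0\}$, left-continuity of $Y_-$ at $\sigma$ forces $Y_{t-}\to 0$ as $t\uparrow\sigma$, whence $\tau_n<\sigma$ for large $n$ and $\tau_n\uparrow\sigma$ on $A\cap\{\sigma>0\}$. The main obstacle is the complementary event $B:=A^c\cap\{\sigma<\infty\}$, on which $X_{\sigma-}\notin C$ forces $X_\sigma\in C$, so that $X$ has a jump into $C$ at $\sigma$; here $Y_{t-}$ stays bounded away from $0$ for $t\le\sigma$ near $\sigma$, making $\tau_n=\sigma$ for all large $n$.

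I would bridge this by invoking Theorem~\ref{ExPredJumps} applied to the predictable process $X$, obtaining predictable stopping times $(\pi_k)_k$ that exhaust its jumps. Using that predictable processes sampled at stopping times are $\mathcal F_{\pi_k-}$-measurable, the events $E_k:=\{X_{\pi_k-}\notin C,\,X_{\pi_k}\in C\}$ lie in $\mathcal F_{\pi_k-}$, so the restrictions $\pi_k^\ast:=\pi_k\I_{E_k}+\infty\I_{E_k^c}$ are again predictable stopping times, with $\sigma\le\pi_k^\ast$ everywhere and $\sigma=\min_k\pi_k^\ast$ on $B$. Writing $\sigma_A:=\sigma\I_A+\infty\I_{A^c}$ and $\sigma_B:=\sigma\I_B+\infty\I_{B^c}$, the graph decomposition $\llbracket\sigma\rrbracket=\llbracket\sigma_A\rrbracket\cup\llbracket\sigma_B\rrbracket$ presents $\llbracket\sigma\rrbracket$ as a union of two disjoint pieces—the second contained in the predictable set $\bigcup_k\llbracket\pi_k^\ast\rrbracket$, the first announceable via $(\tau_n)$ using $A\in\mathcal F_{\sigma-}$—which can be combined into a single announcing sequence of stopping times for $\sigma$, yielding predictability. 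The delicate technical point is the diagonal combination of the two announcing families: on $A$ one must prevent the announcers of the finite $\pi_k^\ast$ from pulling the combined sequence below $\sigma$, which is possible because on $A$ each finite $\pi_k^\ast$ strictly exceeds $\sigma$, so its own announcers eventually exceed $\sigma$ as well.
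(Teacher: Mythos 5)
Your argument for the stopping-time claim is correct and essentially parallel to the paper's: both establish that $\{\sigma\le t\}$ can be expressed through a countable infimum of the distance $d(X_\cdot,C)$ over rationals in $[0,t)$ together with the value at $t$, using compactness of $[0,t]$ and the c\`adl\`ag property for the reverse inclusion.

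The predictability part is where your proposal diverges substantially, and it has genuine gaps. The central obstacle you identify (on $B:=\{X_{\sigma-}\notin C,\ \sigma<\infty\}$ the candidate announcers $\tau_n$ collapse to $\sigma$) is real, but the machinery you bring in to repair it is both much heavier than needed and not fully justified. Specifically: (i) you invoke Theorem~\ref{ExPredJumps} together with the $\sigma$-algebra $\F_{\pi_k-}$ and the fact that a predictable process sampled at a stopping time is $\F_{\tau-}$-measurable; (ii) you assert that the restriction $\pi_k^*:=\pi_k\I_{E_k}+\infty\I_{E_k^c}$ of a predictable stopping time to a set $E_k\in\F_{\pi_k-}$ is again a predictable stopping time --- a standard fact, but one whose usual proofs go through the announceability characterization or the section theorems, neither of which is available at this point in the paper's development; and (iii) the ``diagonal combination'' of the announcing families for $\llbracket\sigma_A\rrbracket$ and for the (infinitely many) $\pi_k^*$ covering $\llbracket\sigma_B\rrbracket$ is acknowledged as delicate but is not actually carried out --- containment of $\llbracket\sigma_B\rrbracket$ in the predictable set $\bigcup_k\llbracket\pi_k^*\rrbracket$ does not by itself give announceability of $\sigma_B$, and $\sigma_B$ is \emph{not} equal to $\min_k\pi_k^*$ off $B$, so this step needs a real argument that is missing.

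The paper's own proof of the predictability claim is a one-liner that bypasses all of this. Once one knows the infimum in \eqref{1appr} is attained (which your stopping-time argument essentially establishes), one has the pointwise identity $\I_{\{\sigma\}}=\I_{[0,\sigma]}\bigl(\I_C(X)\vee\I_C(X_-)\bigr)$: for $t\le\sigma$, $t=\sigma$ iff $X_t\in C$ or $X_{t-}\in C$. The processes $\I_{[0,\sigma]}$ and $X_-$ are c\`ag adapted hence predictable, and $X$ is predictable by assumption, so $\I_{\{\sigma\}}$ is predictable, and then $\I_{[\sigma,\infty)}=1-\I_{[0,\sigma]}+\I_{\{\sigma\}}$ is predictable. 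This avoids announceability, $\F_{\tau-}$, Theorem~\ref{ExPredJumps}, and restrictions of predictable stopping times entirely; it is also what lets the paper deliberately avoid the $\F_{\tau-}$ formalism that [JS03] would use.
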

 
We remark that all the results in this section hold, with exactly the same proof, if the process  $X$ has values not in $\R$ but in a generic topological vector space $Y$ which supports a translation invariant distance $d$ that generates the topology, in which case we assume that  $Y$ is endowed with the Borel sigma algebra $\B(Y)$.
Notice that the function $d(\cdot, B):=\inf\{d(x,y):y\in B\}$ is Lipschitz (with constant $1$), so it is Borel measurable. We will use without further mention the fact that if $B$ is closed and $y_k\to y$ then $d(y_k,B)\to 0$ implies $y\in B$.

\begin{proof}[Proof of Lemma \ref{ApproachTimeLemma}]
Let $t_n \downarrow t$ be s.t. either $ X_{t_n}   \in C $ or $X_{t_n-}  \in C $. If $t_n=t$ then either $ X_{t}   \in C $ or $X_{t-}  \in C $, and if $t_n \downarrow t$ and $t_n>t$ for all $n$ then, whether $ X_{t_n}   \in C $ or $X_{t_n-}  \in C $, necessarily $ X_{t}   \in C $;
thus, the infimum in \eqref{1appr} is attained.
From this and the compactness of $[0,t]$  it follows that $\{\s \leq t\} =L$, where
\begin{align}
\label{ApproachTime}
  \{ X_t \in C \} \cup \bigcap_{n\in \N} \bigcup_{q\in \Q \cap [0,t)}   \{ d(X_q,C)<1/n  \}=:L \quad \text{ belongs to }  \F_t ;
\end{align}
thus, $\s$ is a stopping time.

Now suppose that $X$ is predictable. Since the infimum in \eqref{1appr} is attained, if $t\leq \s$ then $\s= t$
 iff   either $ X_{t}  \in C $ or  $X_{t-} \in C $; in other words 
\begin{align}
\label{Crucial1}
\I_{\{\s\}}= \I_{[0,\s]} \Big(\I_C( X ) \vee \I_C(X_{-})\Big) .
\end{align}
The c\`ag processes $\I_{[0,\s]} $ and $X_{-}$  are adapted and thus predictable, so  \eqref{Crucial1} implies that  $\I_{\{\s\}}$ is predictable and so also   $\I_{[\s,\infty)}=1-\I_{[0,\s]} +\I_{\{\s\}}$ is  predictable. 
\end{proof}

 Although we could, similarly to \cite[Chapter 1, Proposition 1.32]{JacodShir:02},  prove part of Theorem \ref{ExPredJumps}
using Lemma \ref{ApproachTimeLemma}
(making use of the concept of the sigma algebra $\mathcal{F}_{\t -}$), we find it more natural to study directly the jumps times of $X$ as follows.

\begin{lemma}
\label{PredJump}
 If $\t$ is a stopping time, $X$ is c\`adl\`ag adapted, $C_n$ are closed sets and $F=\cup_n C_n$  satisfies  $d(0,F)>0$  then
\begin{align}\label{Jump}
\s:=\inf\{ t > \t: \Delta X_t  \in F \} 
\end{align}
is a  stopping time s.t. $\s>\t$ on $\{ \t<\infty\}$, and $\s$ is predictable if $X$ is predictable.
\end{lemma}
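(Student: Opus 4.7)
The plan is to establish three things: (i) the infimum is attained, giving $\s > \t$ strictly on $\{\t < \infty\}$; (ii) $\s$ is a stopping time; (iii) $\s$ is predictable when $X$ is. Setting $\eps := d(0,F) > 0$, every $y \in F$ satisfies $|y| \geq \eps$. Since a c\`adl\`ag path has only finitely many jumps of size $\geq \eps$ on any compact interval, the set $\{s > \t : \Delta X_s \in F\}$, if nonempty, has a smallest element and cannot accumulate at $\t$; this gives (i) at once.

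For (ii), I would prove that $\{\s < u\} \in \F_u$ for every $u \geq 0$, whence by right-continuity of $\mathbb{F}$, $\{\s \leq t\} = \bigcap_m \{\s < t + 1/m\} \in \F_t$. Decomposing $\{\s < u\} = \bigcup_k A_k$ with $A_k = \{\exists s \in (\t, u) : \Delta X_s \in C_k\}$, the key claim is that
\begin{align*}
A_k = \bigcap_{n \in \N} \bigcup_{\substack{q, r \in \Q \\ q < r < u,\, r - q < 1/n}} \big( \{\t < q\} \cap \{d(X_r - X_q, C_k) < 1/n\} \big).
\end{align*}
Each set in the inner union lies in $\F_u$ (since $\{\t < q\} \in \F_q \subseteq \F_u$ and $X_q, X_r$ are $\F_u$-measurable), so $A_k \in \F_u$. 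The forward inclusion is routine: approximate $s$ strictly from below by a rational $q$ and strictly from above by a rational $r$, using $X_q \to X_{s-}$, $X_r \to X_s$, and the closedness of $C_k$. The reverse inclusion is the crux: from witnessing sequences $(q_n, r_n)$, extract a subsequence with $q_n, r_n \to s \in [\t, u]$; the bound $|X_{r_n} - X_{q_n}| \geq \eps - 1/n$ (from $d(0, C_k) \geq \eps$) rules out both $q_n, r_n \uparrow s$ and $q_n, r_n \downarrow s$, leaving only the straddling configuration $q_n < s \leq r_n$, which forces $X_{r_n} - X_{q_n} \to \Delta X_s \in C_k$. The endpoint cases $s = \t$ and $s = u$ would contradict $q_n > \t$ and $r_n < u$ respectively, so $s \in (\t, u)$ and $\s < u$.

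For (iii), assuming $X$ is predictable, the pointwise identity
\begin{align*}
\I_{\{\s\}} = \I_{(\t, \infty)} \cdot \I_F(\Delta X) \cdot \I_{[0, \s]}
\end{align*}
holds (at $t = \s$ all three factors equal $1$, since the infimum is attained; for $t \leq \t$ the first factor vanishes; for $t > \s$ the third vanishes; and for $\t < t < \s$ minimality of $\s$ forces $\Delta X_t \notin F$). The processes $\I_{(\t, \infty)}$ and $\I_{[0, \s]} = 1 - \I_{(\s, \infty)}$ are left-continuous and adapted, hence predictable; $\Delta X = X - X_{-}$ is predictable since $X_{-}$ is c\`ag adapted. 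Therefore $\I_{\{\s\}}$ is predictable, and so is $\I_{[\s, \infty)} = \I_{\{\s\}} + \I_{(\s, \infty)}$, i.e., $\s$ is predictable.

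The main obstacle is the case analysis in the reverse inclusion of (ii); the hypothesis $d(0, F) > 0$ is exactly what excludes the degenerate limits in which both $q_n$ and $r_n$ approach $s$ from the same side, and it is also what gives (i) for free.
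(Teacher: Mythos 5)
Your proof is correct and follows essentially the same strategy as the paper: (i) is obtained from local finiteness of large jumps of a c\`adl\`ag path, (ii) from a rational-approximation formula for the stopping event together with a compactness argument that uses $d(0,F)>0$ to force the straddling configuration, and (iii) from the identity $\I_{\{\s\}}=\I_{(\t,\s]}\I_F(\Delta X)$, which is exactly what the paper writes. The only small deviation is in (ii): you show $\{\s<u\}\in\F_u$ and invoke right-continuity of the filtration to conclude $\{\s\le t\}\in\F_t$, whereas the paper exhibits $\{\s\le t\}$ directly (with a slightly more baroque parametrization $\{\t\le qt\}$, $A_k^q(t)$); your version is cleaner and the compactness/case-analysis in the reverse inclusion is the same crux.
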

\begin{proof}
We will use the fact that, since $X$ is c\`adl\`ag, for any compact interval $J$ the set $ \{ t\in J:  d(\Delta X_t,0) \geq d(0,F) \} $ is finite, so the set $D:=\{ t > \t: \Delta X_t  \in F \} $  is discrete; in particular, the $\inf$ defining $\s$ is attained, so $\s>\t$ on $\{ \t<\infty\}$.
That $\s$ is a stopping time follows from the identity $\{\s \leq t \}=L\in \F_t$, where
\begin{align*}
L:=  \bigcup_{q \in \Q \cap (0,1)}  \Big(  \{  \t \leq qt\} \cap \Big( \bigcup_{n\geq 1} \bigcap_{k\geq 1} \bigcup_{(u,s)\in A_k^q(t)} \{ d(X_s-X_u,C_n)<\frac{1}{k}   \} \Big) \Big) ,
\end{align*}
and $A_k^q(t) $ is the countable set 
$$A_k^q(t):=\{ (at,bt): a,b \in \Q \cap (q,1], a<b<a+1/k   \} .$$
To prove $\{\s \leq t \}=L$, consider that, 
 since $D$ is discrete,  $\s\leq t$ iff $t >\t$ and   there exist  $s\in (\t,t] $ and  $n\in \N$
s.t. $\Delta X_{s}  \in C_n $.
Thus, $\s\leq t$ iff  there exist $q \in \Q \cap (0,1)$,  $n\in \N$, and sequences $u_k \uparrow s$ and $s_k \downarrow s$  s.t.  $  tq \geq \t$, $u_k<s$, $(u_k,s_k)\in A_k^q(t)$ and
\begin{align}\label{differences}
 d(X_{s_k}-X_{u_k},C_n)<1/k  \quad  \text{ for all } k.
\end{align}
  This shows that $\{\s \leq t \}\subseteq L$, and that to prove the opposite inequality
given $q \in \Q \cap (0,1)$ and $(u_k,s_k)\in A_k^q(t)$  such that  $  tq \geq \t$ and \eqref{differences} hold, we only need to show that we can replace $(u_k,s_k)$ with some $(\hat{u}_k,\hat{s}_k)$ satisfying the same properties and additionally s.t. $\hat{u}_k<s$, $\hat{u}_k \uparrow s$ and $\hat{s}_k \downarrow s$ for some $s$.
This is easily done: by compactness  there exists a subsequence $(n_k)_k$ s.t. $u_{n_k}$ (resp. $s_{n_k}$) is converging to some $u$ (resp $s$) and w.l.o.g. the convergence is monotone; 
 since $u_k<s_k<u_k+1/k $ necessarily $u=s$, and since \eqref{differences} implies  $ \liminf_k  d(X_{s_k}-X_{u_k},0)\geq d(0,F)>0$ necessarily $s_{n_k}$ must be decreasing and  $u_{n_k}$   increasing  and s.t. $u_{n_k}<s$ (otherwise $ \exists \lim_k  X_{s_k}-X_{u_k}=0$); thus, we can choose 
$(\hat{u}_k,\hat{s}_k):=(u_{n_k},s_{n_k})\in A^q_{n_k}(t)\subseteq A_k^q(t)$ as it also satisfies \eqref{differences}.

Now suppose that $X$ is predictable and notice that, since $D$ is discrete, if $\t< t\leq \s$ then $\s= t$ iff $\Delta X_{t}  \in F $; in other words 
$\I_{\{\s\}}= \I_{(\t,\s]} \I_{F}(\Delta X) ,$
which implies that $\s$ is predictable  (just as \eqref{Crucial1} does in Lemma \ref{ApproachTimeLemma}).  \end{proof}
\begin{proof}[Proof of Theorem \ref{ExPredJumps}]
If $d(0,F)>0$, let $ \s_{-1}:=0$  and define recursively $(\s_{k})_{k\in \N}$ by setting   $ \s_{k+1}:=\inf\{ t > \s_{k}:  \Delta X_t  \in F\}$.
By Lemma \ref{PredJump} each $\s_{k}$ is a stopping time, and a predictable one if $X$ is predictable. 
Since  $ \s_{n} < \s_{n+1} $  on $\{\s_n <\infty \}$, and since for any compact interval $J$ the set $ \{ t\in J:  d(\Delta X_t ,0) \geq d(0,F) \} $ is finite,  
$(\s_{k})_{k\in \N}$ exhausts the jumps of $X$ in $F$ and it is strictly increasing to $\infty$.

For general $F$ s.t. $0\notin F$, we  reduce to the previous case by using the annullus
 $D_n:=\{y: d(y,0)\in (2^n,2^{n+1}]\}$. Notice that  $D_n$ can be written as the union of countably many closed sets, and so also can $F\cap D_n$.
 Now, given $n\in \Z$, set $ \s_{-1}^n:=0$ and define recursively $(\s^{n}_k)_{k\in \N}$ by setting   $ \s_{k+1}^n:=\inf\{ t > \s_{k}^n:  \Delta X_t  \in F\cap D_n \}$, so that  $$\{ \Delta X \in F \}=\cup_n \{  \Delta X  \in F\cap D_n \}=\cup_{k,n} \llbracket \s^n_k \rrbracket .$$
Moreover,  since
 $ \s^n_{k+1} > \s^n_{k} $  on $\{\s^n_k <\infty \}$, and since $D_n$ and $D_m$ are disjoint for $n \neq m$ and $\Delta X_{\s^n_{k}} \in  F\cap D_n $  on $\{ \s^n_{k} <\infty\}$  for every $k$, it follows that $ \s^i_{j} \neq \s^n_{k} $  on $\{ \s^n_{k} <\infty\}$ if $(i,j)\neq (n,k)$, so enumerating the countable family $(\s^{n}_k)_{k, n\in \Z}$  we get a sequence which exactly exhausts the  jumps of $X$ in $F$.
\end{proof}

\section{How to approximate the compensator}
\label{How to approximate the compensator}
In this section we prove Theorem \ref{SumDoob}; to do this, we revisit the proof of the existence of the Doob-Meyer decomposition given \cite{BeScVe12}, and we strengthen it as to obtain that $\A^n_t\to A_t$ a.s. for all $t$ along a subsequence.
For didactical reasons we prefer to present below the whole proof, rather than explaining how to modify the one given in \cite{BeScVe12}.

To obtain convergence at a given stopping time, we will use the following lemma, which is reminiscent of \cite[Lemma A.2]{LaZi07}, 
and whose point is that the subsequences (in the assumption and in the conclusion) are not allowed to depend on $\omega$.
\begin{lemma}
\label{subseq}
Let $f,g,(f^n)_{n}, (g^n)_{n}$ be random variables  in  $\Lone$ that satisfy 
$$0\leq f^n \leq g^n ,\quad g^n \to g  \text{ in } \Lone \,  , \quad 
  \lim_{n\to\infty}\E[f^n] = \E[f]. $$ 
Assume moreover that for every  subsequence $(n_i)_i$
\begin{align}
\label{AlongSubseq}
 \limsup_{i\to\infty}
f^{n_i}(\omega) = f (\omega) \quad \text{ for }\, \,  \P  \, \text{a.e. } \omega . 
\end{align}
Then, there exists $h\in \Lone$ and a  subsequence $(n_i)_i$ such that $f^{n_i}\leq h$ for all $i$ and  $(f^{n_i})_{i}$ converges almost surely to $f$ as $i\to \infty$.
\end{lemma}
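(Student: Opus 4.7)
The plan is to deduce from the hypotheses that $f^n\to f$ in $\Lone$, and then extract the desired subsequence by a standard Riesz-type argument.

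For the first step, apply \eqref{AlongSubseq} to the trivial subsequence $n_i:=i$ to obtain $\limsup_{n\to\infty} f^n = f$ a.s. The map $x\mapsto x^+$ commutes with $\limsup$ (being continuous and non-decreasing), so $\limsup_n (f^n-f)^+ = (\limsup_n f^n - f)^+ = 0$ a.s.; non-negativity then gives $(f^n-f)^+\to 0$ a.s. Since $g^n\to g$ in $\Lone$ the family $(g^n)_n$ is uniformly integrable, and the bound $0\leq (f^n-f)^+\leq f^n\leq g^n$ transfers uniform integrability to $((f^n-f)^+)_n$. Vitali's theorem then yields $\E[(f^n-f)^+]\to 0$; combining this with $\E[f^n]\to\E[f]$ and the identity $(f-f^n)^+=(f^n-f)^+-(f^n-f)$ gives $\E[(f-f^n)^+]\to 0$, whence $\E[|f-f^n|]\to 0$.

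For the extraction, pick a subsequence $(n_i)_i$ with $\E[|f^{n_i}-f|]\leq 2^{-i}$ and set $h:=f+\sum_{i=1}^\infty|f^{n_i}-f|$. Monotone convergence yields $\E[h]\leq\E[f]+1$, so $h\in\Lone$; in particular the series is a.s.\ finite, forcing $|f^{n_i}-f|\to 0$ a.s., while $f^{n_i}\leq f+|f^{n_i}-f|\leq h$ holds for every $i$.

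The only non-routine step is the observation that the seemingly one-sided hypothesis $\limsup_n f^n = f$ a.s., coupled with the uniform integrability delivered by $g^n\to g$ in $\Lone$, already implies $\Lone$-convergence of the full sequence; the control on the negative part is then automatic from $\E[f^n]\to\E[f]$, and the pointwise extraction that follows is routine.
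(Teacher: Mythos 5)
Your proof is correct, but it takes a genuinely different route from the paper's. The paper builds the dominating function directly from the $g^n$'s (setting $h:=g+\sum_n|g^n-g|$ after extracting a rapidly convergent subsequence of $(g^n)_n$), then works with the running suprema $\sup_{m\geq n}f^m$: dominated convergence and \eqref{AlongSubseq} give $\E[\sup_{m\geq n}f^m]\to\E[f]$, which combined with $\E[f^n]\to\E[f]$ forces $f^n-\sup_{m\geq n}f^m\to0$ in $\Lone$; extracting an a.s.\ convergent subsequence of this difference and exploiting monotonicity of $(\sup_{m\geq n}f^m)_n$ finishes. You instead prove the stronger statement that the \emph{entire} sequence converges $f^n\to f$ in $\Lone$, splitting $|f-f^n|$ into $(f^n-f)^+$ (handled by Vitali, using the uniform integrability inherited from $g^n\to g$ in $\Lone$) and $(f-f^n)^+$ (handled algebraically from $\E[f^n]\to\E[f]$); the extraction is then the standard Borel--Cantelli argument, with the dominating $h$ built from the $f^{n_i}$'s themselves rather than the $g^n$'s. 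A pleasant side effect of your route is that it only invokes hypothesis \eqref{AlongSubseq} for the trivial subsequence $n_i=i$, so it in fact establishes the lemma under the weaker assumption $\limsup_n f^n=f$ a.s.; the paper's argument, by contrast, applies \eqref{AlongSubseq} after having already passed to a subsequence to control the $g^n$'s. Both proofs are sound; yours is arguably the cleaner of the two and isolates a stronger intermediate conclusion ($\Lone$-convergence of the full sequence) that the paper's proof does not deliver.
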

\begin{proof}
Passing to a subsequence (without relabeling) we get that $ || g^n -g ||_{\Lone}\leq 2^{-n}$, thus  the random variable $h:=g+\sum_n | g^n -g |$ is integrable and 
 dominates the sequence $(f^n)_n$.
By  the dominated convergence theorem and \eqref{AlongSubseq} it follows that
\begin{equation}
\label{Esupf} 
\lim_{n\to\infty} \mathbb{E}\left[\sup_{m\ge n}f^m\right] =
\mathbb{E}[f] .
\end{equation}
The assumption $ \mathbb{E}[f] = \lim_{n}\mathbb{E}[f^n]$ and \eqref{Esupf} imply that  $ h^n:=f^n-\sup_{m\ge n} f^m$ converges to $0$ in $\mathbb{L}^1$ (since $h^n\leq 0$).
 We can then extract a further subsequence (not relabeled) 
 such that $ h^{n}$ converges to $0$ $\P$ a.s. Thus, thanks to the monotonicity of $ (\sup_{m\geq n}f^m)_n$, also $(f^{n})_n$ also converges a.s., and then \eqref{AlongSubseq} implies that its limit is $f$.
\end{proof}

\begin{proof}[Proof of Theorem \ref{SumDoob}]
The identity
  $var(A)_t=var(A)_{t-}+|A_t -A_{t-}|$ shows that    $var(A)$ is  predictable (since $var(A)_{-}$ and $  A_{-}$ are adapted and c\`ag). Thus $A^{\pm}:=(var(A)\pm A)/2$ are predictable increasing and satisfy $A=A^+ -A^-$, so we can assume  w.l.o.g. that $A$ is increasing.  Moreover, by passing to an equivalent measure we can assume w.l.o.g. that $A$ is integrable.

 If  $A_0=0$ set $S:=A$, which trivially is a submartingale of class D.
Let $(M^n_t+A^n_t)_{t\in \D_n}$  be the  discrete time Doob decomposition of the sampled process $(S_t)_{t\in \D_n}$,
and  extend $M^n$ and $A^n$ to $[0,1]$  setting 
$$M^n_t:=\E[M^n_1|\F_t] \quad  \text{ and } \quad A^n_t:= A^n_{k/2^n}  \,\,  \text{ for } t\in ((k-1)/2^n,k/2^n] ;$$
 then it follows from \cite[Lemma 2.1 and 2.2]{BeScVe12} that  there exist $\hat{M}\in \Lone$ and convex weights $\L_n^n, \ldots, \L_{N_n}^n$ such that 
$\M^n:=\L_n^n M^n + \ldots + \L_{N_n}^n M^{N_n}$ satisfies $\M^n_1\to \hat{M}$ in $L^1$.
Now define 
\begin{align}
\label{DefOfA}
M_t:=\E[\hat{M}|\F_t] , \quad B:=S-M ,  \quad  \A^n:=\L_n^n A^n + \ldots + \L_{N_n}^n A^{N_n} .
\end{align}
We take of course the c\`adl\`ag versions of the martingales $M^n$ and $M$; in particular, $B$ is c\`adl\`ag. We now want to show that,  a.s. for all $t\in[0,1]$,   $\exists \lim_i \A^{n_i}_t=B_t$ for some subsequence $(n_i)_i$ (which does not depend on $t$ nor $\omega$); this would show\footnote{Because $\A^n$ is adapted and c\`ag, and any measurable process indistinguishable from zero is predictable when the filtration satisfies the usual conditions (see \cite[Lemma 13.8]{RoWi00}).} that $B$ is predictable, so $S=M+B$ would be a Doob-Meyer decomposition of $S=0+A$, and thus $B=A$ by the uniqueness of the decomposition (which follows from  \cite[Lemma 22.11]{Kall97}). 

Since  $\M^n_{1} \to \hat{M}=M_{1}$ in $\Lone$,  by Jensen inequality and  the optional sampling theorem we get that, for every  $[0,1]$-valued stopping time $\t$,  $S_{\t}-\M^n_{\t}$ converges to $ S_{\t}-M_{\t} = B_{\t}$ in $L^1$; in particular,  
since $\A^n_t=S_{t}-\M^n_{t}$ holds for $t\in \D_n$, we get that 
$\A^n_t \to B_{t}$ in $L^1$ for all $t\in \D$. 
 Passing to a subsequence (without relabeling), we can also obtain that $ ||\A^n_1 - B_{1}||_{\Lone}\leq 2^{-n} $ and $\A^n_{t} \to B_{t}$ a.s. for all $t\in \D$. It follows that $B$ is a.s. increasing on $\D$, and so by right-continuity also on $[0,1]$, and the random variable $h:=B_{1}+\sum_n | \A^n_1 -B_{1} |$ is integrable and  dominates the sequence $(\A_{1}^n)_n$.
 
We remark that, since the equality $\A^n_t=S_{t}-\M^n_{t}$ generally fails if $t\notin \D_n$, it is unclear for now  if,  given a $[0,1]$-valued  stopping time $\t$, we can also get $\A^{n}_\t \to B_{\t}$ a.s.; we will now explain how to obtain this by passing to a subsequence.
We only need to show that   $\exists \lim_{i} \E[\A^{n_i}_\t]= \E[B_\t]$ and  $\limsup_i \A^{n_i}_\t= B_\t$ a.s. for every  subsequence $(n_i)_i$; indeed, applying Lemma \ref{subseq} to $f^n=\A_{\t}^n$, $f=B_{\t}, g^n=\A_1^n$ and $g=B_1$ would then yield a subsequence $(\tilde{n}_i)_i$ such that $\lim_i\A_{\t}^{\tilde{n}_i} = B_{\t}$ a.s..
Take then an arbitrary subsequence $(n_i)_i$, and recall that $\A^n,B$ are increasing and  $\A^n_t \to B_{t}$ a.s. and in $L^1$ for all $t\in \D$. It follows that $\limsup_i \A^{n_i}_\t\leq B_\t$, and that applying Fatou's lemma to $(\A^{n_i}_1-\A^{n_i}_\t)_i$ gives 
$$\liminf_i \E[A^{n_i}_\t]\leq \liminf_i \E[\A^{n_i}_\t]\leq \limsup_i \E[ \A^{n_i}_\t]\leq  \E[\limsup_i \A^{n_i}_\t]\leq\E[B_\t] . $$ 
Thus, to conclude the existence of $(\tilde{n}_i)_i$ such that $\lim_i\A_{\t}^{\tilde{n}_i} = B_{\t}$ a.s. it is enough to show that $\exists \lim_n \E[A^{n}_\t]=\E[B_\t] $. This is easy: since $S$ is of class D, if $\theta_n:=\min \{t\in \D_n: t\geq \t\}$ then $\theta_n\downarrow \t$ and $A^n_\t=A^n_{\theta_n}$ so we get
$$\E[A^{n}_\t]=\E[A^{n}_{\theta_n}]=\E[S_{\theta_n}]-\E[M_0]\to \E[S_\t]-\E[M_0]=\E[B_\t] .$$

Now use Theorem \ref{ExPredJumps} to obtain $[0,1]\cup\{\infty\}$-valued\footnote{As we are working on the time interval $[0,1]$,  all stopping times have values in $[0,1]\cup\{ \infty\}$.}  stopping times $(\s_k)_k$
which exactly exhaust all the jumps of $B$ (i.e. the jumps of $B$ in $\R\setminus\{0\}$), and set $\t_k:=1\wedge \s_k$.
As shown above, there exists a subsequence $(n_i)_i$   such that $\lim_i\A_{\t_k}^{n_{i}} = B_{\t_k}$ a.s. for $k=1$. By the same token, passing to further subsequences (without relabeling) and using a diagonal procedure, we can find a subsequence $(n_i)_i$ such that $\lim_i\A_{\t_k}^{n_i} = B_{\t_k}$ a.s. simultaneously for all $k$.
Since $\A^n,B$ are increasing, $B$ is c\`adl\`ag and  $\A^n_t \to B_{t}$ for all $t\in \D$,  necessarily $\exists \lim_n \A^{n}_t=B_t$  if $B$  is continuous at $t$; since  $\exists \lim_i\A_{\t_k}^{n_i} = B_{\t_k}$  for all $k$ and 
$(\t_k)_k$ exhausts all the jumps of $B$, it follows that  $\exists \lim_i \A^{n_i}_t=B_t$ a.s. for all $t\in [0,1]$.
Since $\A^n_0=0$, this  concludes the proof in the case $A_0=0$; in the general case, apply the above to  $\tilde{A}=A-A_0\I_{[0,\i)}$ to obtain some $\tilde{\A}^n$ and $\tilde{h}$, and then set   $\A^n:=A_0\I_{[0,\i)}+ \tilde{\A}^n$ and $h:=|A_0|+ \tilde{h}$.
\end{proof}

\section{Predictable processes are natural, and vice versa}
\label{Predictable processes are natural, and vice versa}
In this section we prove in continuous time that an increasing process is predictable iff it is natural by passing to the limit the analogous discrete time statement, making use of 
Theorem \ref{SumDoob}. 

 \emph{In discrete time}, we will call \emph{increasing  process} 
an increasing sequence of integrable random variables $A=(A_n)_{n\in \N}$ such that  $A_0=0$. An increasing  process is called 
predictable if $A_{n+1}$ is $\F_{n}$-measurable for every $n\geq 0$, and is called 
natural if, for every bounded martingale $M=(M_n)_{n\in \N}$, we have $\E[M_n A_n]=\E[\sum_{k=1}^n M_{k-1} (A_k-A_{k-1})]$
for every $n\geq 0$.
In this setting it is trivial to prove that  an increasing processes is  predictable  iff it is  natural (see e.g. \cite[Chapter 1, Proposition 4.3]{KaSh88}).

When working on the time interval $[0,1]$, 
 a c\`adl\`ag increasing integrable process $A$ s.t. $A_0=0$ is called \emph{natural} if, for every bounded martingale $M$, 
$\E[M_1 A_1]=\E[\int_0^1 M_{s-} dA_s]$. 
Notice that in some books an equivalent definition is used: in discrete time it is trivial to prove that every increasing  process $A$ satisfies  $\E[M_n A_n]=\E[\sum_{k=1}^n M_{k} (A_k-A_{k-1})]$ for all bounded martingales $M$ and $n\in \N$, and that taking continuous time limits one immediately obtains that every c\`adl\`ag increasing  process $A$ satisfies  
$\E[M_1 A_1]=\E[\int_0^1 M_{s} dA_s]$ 
(see e.g.  \cite[Chapter 1, Lemma 4.7]{KaSh88}); thus $A$ is natural iff $\E[\int_0^1 \Delta M_{s} dA_s]=0$.

To pass to the continuous time limit, we will need the following approximation lemma and definitions.
Given functions $f,g$ and a partition $\pi=\{0=t_0\leq t_1\leq  \ldots \leq t_{n+1}=1\}$, set
\begin{align*}
 \textstyle 
f^{\pi}:=\sum_{i=0}^n f(t_{i}) \I_{[t_{i},t_{i+1})}+
f(1) \I_{\{1\}}  \, , \, \sum_{\pi} f \Delta g:=\sum_{i= 0}^n f(t_{i}) (g(t_{i+1})-g(t_{i})) .
\end{align*}
\begin{lemma}
\label{approxSumInt}
Given  $f,g,g_n:[0,1]\to \R$, with $g,g_n$ increasing and $f,g$ c\`adl\`ag, let $D$ be a dense subset of $[0,1]$ and  $(\pi^n)_{n\in \N}$ be partitions of $[0,1]$ which satisfy $ \pi_n \subseteq \pi_{n+1}$ and  $\cup_n \pi_n =D \supseteq 
\{\Delta f  \neq 0\}$.
Then $g_k(t)\to g(t)$ for all $t\in D$ implies that 
\begin{align}
\label{DiscreteConvCont}
\sum_{\pi_k} f\Delta g_k \to \int_{(0,1]} f(s-)  dg(s)=:\int f_{-} dg  \quad \text{ as } \, k\to \infty .
\end{align}
\end{lemma}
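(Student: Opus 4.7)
My plan is to identify the sum $\sum_{\pi_k} f\Delta g_k$ with the Lebesgue--Stieltjes integral $\int_{(0,1]}\tilde f^k\,dg_k$ of the c\`ag step approximation defined by $\tilde f^k(s):=f(t_i^k)$ for $s\in(t_i^k,t_{i+1}^k]$, and then show $\int\tilde f^k\,dg_k\to\int f_-\,dg$ by approximating both $\tilde f^k$ and $f_-$ uniformly, in sup norm, by a common simple step function whose breakpoints lie in $D$. The hypotheses force the mesh of $\pi_k$ to tend to $0$ (since $D=\cup_k\pi_k$ is dense in $[0,1]$ and the partitions are nested), so for each $s\in(0,1]$ the left endpoint $t_i^k$ satisfying $t_i^k<s\le t_{i+1}^k$ converges strictly upward to $s$, and therefore $\tilde f^k(s)\to f(s-)$ by c\`adl\`ag-ness of $f$.

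Given $\e>0$, I would invoke the classical oscillation property of c\`adl\`ag functions to produce a finite set of points $0=u_0<u_1<\ldots<u_m=1$ such that the oscillation of $f$ on each half-open interval $[u_l,u_{l+1})$ is less than $\e$. Any jump of $f$ of size $\ge\e$ must then occur at some $u_l$, and so lies in $D$ by assumption; the remaining $u_l$ can be chosen in $D$ by density. Setting $h^\e(s):=f(u_l)$ for $s\in(u_l,u_{l+1}]$, one verifies that $|h^\e(s)-f_-(s)|\le\e$ on $(0,1]$, because $f_-(s)$ is a limit of values $f(t)$ with $t\in[u_l,u_{l+1})$. Moreover, once $k$ is large enough that all $u_l$ lie in $\pi_k$, every subinterval $(t_i^k,t_{i+1}^k]$ of $\pi_k$ is contained in a single $[u_l,u_{l+1})$, so $|\tilde f^k(s)-h^\e(s)|\le\e$ as well.

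The triangle inequality then gives
\begin{equation*}
\Bigl|\int\tilde f^k\,dg_k-\int f_-\,dg\Bigr|\le \e(g_k(1)-g_k(0))+\Bigl|\int h^\e\,d(g_k-g)\Bigr|+\e(g(1)-g(0)).
\end{equation*}
Since $h^\e$ is a simple step function with breakpoints in $D$, the middle term equals $\bigl|\sum_{l=0}^{m-1} f(u_l)\bigl[(g_k(u_{l+1})-g_k(u_l))-(g(u_{l+1})-g(u_l))\bigr]\bigr|$, a fixed finite sum each of whose terms tends to zero by the hypothesis $g_k(t)\to g(t)$ for $t\in D$; and $g_k(1)-g_k(0)\to g(1)-g(0)$ keeps the total mass bounded. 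Taking $\limsup$ in $k$ and then $\e\to 0$ concludes the argument.

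The main obstacle is the construction of $h^\e$: one needs a single finite step function that simultaneously $\e$-approximates $f_-$ and $\tilde f^k$ (for all sufficiently large $k$) in sup norm, while having its jumps only in $D$ so that $\int h^\e\,d(g_k-g)$ can be written as a fixed finite linear combination and seen to vanish. Merely combining the pointwise convergence $\tilde f^k\to f_-$ with the weak convergence $dg_k\to dg$ is not by itself enough, since $f_-$ may be discontinuous at atoms of $dg$, so Helly--Bray does not apply directly.
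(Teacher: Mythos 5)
Your proof is correct and follows essentially the same strategy as the paper's: uniformly approximate $f_-$ by a c\`ag step function whose breakpoints lie in $D$, then split the error by the triangle inequality into two uniformly small step-approximation terms (each bounded by $\e$ times the total mass, with the masses bounded because $g_k(1)-g_k(0)\to g(1)-g(0)$, using $0,1\in D$) plus one \emph{fixed} finite sum $\sum_l f(u_l)\bigl[(g_k(u_{l+1})-g_k(u_l))-(g(u_{l+1})-g(u_l))\bigr]$ that vanishes as $k\to\infty$ because $g_k\to g$ pointwise on $D$. The only real difference is how the step approximant is built. The paper uses $f^{\pi_n}$ along the given partitions, so the breakpoints lie in $D$ for free and every jump of $f$ is automatically a breakpoint (because $\{\Delta f\neq 0\}\subseteq D=\cup_n\pi_n$); it then proves $\|f^{\pi_n}-f\|_\infty\to 0$ by a compactness argument that is, in effect, a proof of the very c\`adl\`ag oscillation lemma you invoke. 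You instead take the abstract oscillation lemma as a black box and then must shift the resulting $u_l$ into $D$, and here you are a little imprecise: if you demand oscillation $<\e$ on each $[u_l,u_{l+1})$ you cannot just move a non-jump $u_l$ without possibly spoiling the bound on the adjacent intervals. The fix is easy (note that \emph{every} jump of $f$, not just those of size $\geq\e$, lies in $D$ by hypothesis; so each $u_l\notin D$ is a continuity point of $f$, and if you start from oscillation $<\e/3$ you have room to nudge such $u_l$ to a nearby point of $D$ while keeping oscillation $<\e$), but it is an extra step that the paper's choice of $f^{\pi_n}$ avoids. Apart from this wrinkle and the unused remark about pointwise convergence $\tilde f^k(s)\to f(s-)$, your argument is the same in substance as the paper's.
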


\begin{proof} %[Proof of Lemma \ref{approxSumInt}]
Since $f$ is right continuous at $0\in D$, for any $k\in \N$ the set $$A_k:=\{t\in(0,1]: \limsup_n \sup_{[0,t) } |f^{\pi_n} -f | <1/k\}$$
is non-empty. 
Its supremum $\bar{t}$ is attained,
 since $D$ is dense and $\exists f(\bar{t}-)$, and cannot be $<1$: otherwise, whether $\Delta f(\bar{t})=0$ or $\bar{t}\in \pi_n$ for big enough $n$,  the right continuity of  $f$ would imply the existence of a $s>\bar{t}$ in $A$. Thus $A_k=[0,1]$ for all $k$, so $f^{\pi_n}$ converges uniformly to $f$. In particular $\int f_{-}^{\pi_n} dg \to \int f_{-} dg $ as $n\to \infty$, and since the sequence $(g_k(1))_k$ is converging and thus is bounded, the inequality 
$$ |\sum_{\pi_k}( f^{\pi_n}- f)  \Delta g_k | \leq \sup_s |(f^{\pi_n}- f)(s)|   \sup_k g_k(1)$$
shows that   $\sum_{\pi_k}( f^{\pi_n}- f)  \Delta g_k \to 0$ as $n\to \infty$, uniformly in $k$.
Thus, to conclude the proof it is enough to show \eqref{DiscreteConvCont} when $f$ is replaced by $f^{\pi_n}$.
Since, for $k\geq n$,
$$\int f_{-}^{\pi_n} dg=\sum_{\pi_n} f  \Delta g \quad \text{ and } \quad \sum_{\pi_k} f^{\pi_n} \Delta g_k=\sum_{\pi_n} f^{\pi_n}  \Delta g_k \,=\sum_{\pi_n} f  \Delta g_k \,  , $$
we need to show that $\sum_{\pi_n} f \Delta g_k \to \sum_{\pi_n} f  \Delta g$ as $k\to \infty$, which is trivially true since $g_k(t)\to g(t)$ at every $t\in D\supseteq \pi_n$.
\end{proof}
We will call \emph{optional partition} an increasing  finite or infinite sequence  of stopping times  $\pi$ which is \emph{pointwise finite on compacts}, meaning that  $\pi=(\s_n)_{n\in I}, I\subseteq \N$, $\s_n\leq \s_{n+1}$ for all $n$ and  $\{n: \s_n(\omega)\leq t\}$ is a finite  set for any $\omega$ and $t<\infty$. 
Notice that,  working on the time index $[0,1]$, Theorem \ref{ExPredJumps} will tell us that the jumps of size at least $1/n$ (i.e. the jumps in $(-\infty,-1/n]\cup [1/n,\infty)$) of a c\`adl\`ag process $X$ 
form an optional partition of $[0,1]\cup\{ \infty\}$-valued stopping times. 
 If $\pi=(\s_n)_{n}$,  we will denote write $\pi(\omega)$ for the sequence of reals $(\s_n(\omega))_{n}$. 
Given two finite optional partitions $\pi=(\s_n)_{n=0}^N, \hat{\pi}=(\hat{\s}_j)_{j=0}^J$, a convenient way to construct a finite \emph{increasing} family of stopping times $\pi\cup\hat{\pi}$ which satisfies $(\pi\cup\hat{\pi})(\omega)=\pi(\omega) \cup\hat{\pi} (\omega)$ for all $\omega$ is to 
 define  $\pi\cup\hat{\pi}$ to be the ordered\footnote{One can indeed order this family!} family of stopping times 
$$\s_{1} \wedge \hat{\s}_j 
, \quad  \s_{n}
, \quad  \s_n \vee  (\hat{\s}_j  \wedge \s_{n+1} ) , \quad  \s_N
, \quad \s_N\vee \hat{\s}_j  ,
$$
where $j=0,\ldots, J \, ,\,\,  n=0,\ldots, N-1 $.
If $\pi=(\t_j)_{j=0}^J$ is a finite  partition and $N,B$ are  c\`adl\`ag processes, define
\begin{align*}
 \textstyle 
\sum_{\pi} N_{-} \Delta B:=\sum_{j=1}^J  N_{\t_{j-1}} (B_{\t_j}-B_{\t_{j-1}}),
\end{align*}
which satisfies $(\sum_{\pi} N_{-} \Delta B)(\omega)=\sum_{\pi(\omega)} N_{-}(\omega) \Delta B(\omega)$.
 Given a finite optional partition $\pi=(\s_n)_{n=0}^N$, we will say that  a process $B$   is \emph{$\pi$-predictable} if  $B_{\s_n}$ is $ \F_{\s_{n-1}}$-measurable, $B_{0}$ is $ \F_{0}$-measurable and 
\begin{align}
\label{simple} 
\textstyle
B=\I_{\{0\}} B_0+  \sum_{n= 1}^N \I_{( \s_{n-1} ,\s_n ]} B_{\s_n} \, .
\end{align}
Notice that, if $\alpha, \beta, \gamma$ are stopping times and $A$ is a $\F_{\alpha}$ measurable random variable then
 $C:=A\I_{\{\alpha\leq \gamma\}}$  is  $\F_{ \alpha}\cap  \F_{ \gamma} \subseteq  \F_{\alpha \vee(\beta\wedge \gamma)}$-measurable and
 $$A\I_{(\alpha,\gamma]}=A\I_{(\alpha, \alpha \vee(\beta\wedge \gamma) ]}+C \I_{(\alpha \vee(\beta\wedge \gamma) ,\gamma]} ;$$
so, if $B$ is $\pi$-predictable, it is trivially $(\pi\cup\hat{\pi})$-predictable for any finite optional partition $\hat{\pi}$ (this is why we defined $\pi\cup\hat{\pi}$ as above). 
\begin{theorem}
\label{PredSoNat}
A c\`adl\`ag increasing integrable process $A$ s.t. $A_0=0$ is natural iff it is predictable.
\end{theorem}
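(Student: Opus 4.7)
\emph{Proof plan.} The plan is to prove the two implications separately: $(\Leftarrow)$ will use Theorem~\ref{SumDoob}, while $(\Rightarrow)$ will follow from the classical Doob-Meyer decomposition combined with $(\Leftarrow)$. The key preliminary identity I would establish is that, for any finite optional partition $\pi=(\s_n)_{n=0}^N$ with $\s_0=0$ and $\s_N=1$, any $\pi$-predictable process $B$ with $B_0=0$, and any bounded martingale $M$,
\[
\E[M_1 B_1] \,=\, \E\Big[\sum_\pi M_-\Delta B\Big].
\]
This follows by telescoping $M_{\s_n}B_{\s_n}-M_{\s_{n-1}}B_{\s_{n-1}}=M_{\s_n}(B_{\s_n}-B_{\s_{n-1}})+B_{\s_{n-1}}(M_{\s_n}-M_{\s_{n-1}})$, taking expectations, and using optional sampling together with the $\F_{\s_{n-1}}$-measurability of both $B_{\s_{n-1}}$ and $B_{\s_n}-B_{\s_{n-1}}$; it is the continuous-time incarnation of the trivial discrete-time ``predictable $\Rightarrow$ natural''.

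For $(\Leftarrow)$, given a c\`adl\`ag predictable increasing integrable $A$ with $A_0=0$, Theorem~\ref{SumDoob} supplies $\D_{N_n}$-predictable increasing $\A^n\to A$ pointwise a.s.\ with $var(\A^n)_1\leq h\in L^1$ and $\A^n_0=0$. For a fixed bounded martingale $M$, I would use Theorem~\ref{ExPredJumps} to obtain stopping times $(\rho_k)_k$ exhausting the jumps of $M$, and enlarge $\D_{N_n}$ to a nested sequence of finite optional partitions $\tilde\pi_n:=\D_{N_n}\cup\{\rho_1\wedge 1,\ldots,\rho_n\wedge 1\}$, so that $\cup_n\tilde\pi_n(\omega)\supseteq\D\cup\{\Delta M(\omega)\neq 0\}$ pathwise. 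Since $\A^n$ is $\D_{N_n}$-predictable it is \emph{a fortiori} $\tilde\pi_n$-predictable (as noted just before the theorem statement), so the identity gives $\E[M_1\A^n_1]=\E[\sum_{\tilde\pi_n}M_-\Delta\A^n]$. The left-hand side tends to $\E[M_1 A_1]$ by dominated convergence (since $|\A^n_1|\leq h$); for the right-hand side I would apply Lemma~\ref{approxSumInt} pathwise with $f=M(\omega)$, $g=A(\omega)$, $g_k=\A^k(\omega)$ and $D=\cup_n\tilde\pi_n(\omega)$ to obtain a.s.\ convergence to $\int_0^1 M_{s-}\,dA_s$, and then invoke dominated convergence via the uniform bound $|\sum_{\tilde\pi_n}M_-\Delta\A^n|\leq \|M\|_\infty\, h\in L^1$. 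This yields $\E[M_1 A_1]=\E[\int_0^1 M_{s-}\,dA_s]$, i.e.\ $A$ is natural.

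For $(\Rightarrow)$, any integrable c\`adl\`ag increasing $A$ with $A_0=0$ is trivially a submartingale of class~D, so the classical Doob-Meyer theorem gives a decomposition $A=M+A^p$ with $A^p$ predictable increasing from $0$ and $M$ a uniformly integrable martingale from $0$. Applying the just-proved $(\Leftarrow)$ to $A^p$ shows that $A^p$ is natural, so both $A=0+A$ and $A=M+A^p$ are decompositions of $A$ into a UI martingale from $0$ plus a natural increasing process from $0$; the classical uniqueness of such a decomposition (\cite[Lemma 22.11]{Kall97}) forces $M=0$ and $A=A^p$, whence $A$ is predictable. The main obstacle lies in $(\Leftarrow)$: Lemma~\ref{approxSumInt} requires the partitions to be nested and to cover the jumps of the test martingale $M$, neither of which is automatic from Theorem~\ref{SumDoob}, and it is the enlargement of $\D_{N_n}$ using the stopping times from Theorem~\ref{ExPredJumps} that makes the lemma applicable.
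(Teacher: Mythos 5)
Your proof is correct and follows essentially the same route as the paper's: establish the discrete $\pi$-predictable~$\Rightarrow$~$\pi$-natural identity, apply it to the $\D_{N_n}$-predictable approximants $\A^n$ from Theorem~\ref{SumDoob} along finite optional partitions enlarged (via Theorem~\ref{ExPredJumps}) to cover the jumps of the test martingale $M$, pass to the limit pathwise by Lemma~\ref{approxSumInt} with $L^1$-domination by $\|M\|_\infty h$, and obtain the converse from the Doob--Meyer decomposition plus uniqueness of the martingale-plus-natural decomposition (for which the relevant reference is \cite[Chapter 1, Theorem 4.10]{KaSh88} rather than \cite[Lemma 22.11]{Kall97}, the latter treating predictable rather than natural compensators). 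The only cosmetic difference is that the paper enlarges $\D_{N_n}$ by the optional partition of jumps of $M$ of size at least $1/n$ and passes $k\to\infty$ over finite truncations of it, whereas you enlarge directly by the finite family $\{\rho_1\wedge 1,\ldots,\rho_n\wedge 1\}$ and thereby dispense with that intermediate limit.
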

\begin{proof}
If $M$ is a  martingale bounded by a constant $C$ and $A=(A_t)_{t\in [0,1]}$ is predictable, let $h,\A^n,N_n$ be as in  Theorem \ref{SumDoob},  let $(\s_i)_i$ be the optional partition of the jumps  of $M$ of size at least $1/n$, and set $\hat{\pi}^n:=(1\wedge \s_i)_i$. 
Let $\pi_k^n=(\t_i)_{i}$ be the finite optional partition \mbox{$\D_{N_n} \cup (1\wedge \s_i)_{i=0,\ldots ,k}$}. Since  $\A^n$ is $\D_{N_n}$-predictable,  it is  $ \pi_k^n$-predictable, and thus $(\A^n_{\t_i})_i$ is a $(\F_{\t_i})_i$-predictable increasing process, and so also a $(\F_{\t_i})_i$-natural process.
Thus 
$\E[M_1 \A^n_1]=\E[\sum_{ \pi_k^n} M_{-}\Delta \A^n]$ holds
 since $(M_{\t_i},\F_{\t_i})_i$ is a bounded martingale.

Now, fix a generic $\omega\in \Omega$ and set  $\pi^n(\omega):= \D_{N_n} \cup \hat{\pi}^n(\omega)$; since $(\s_i)_i$ is pointwise finite on compacts, there exists $k_0=k_0(\omega)$ s.t. $ \pi_{k}^n (\omega)=\pi^n(\omega)$ if $k\geq k_0$. Thus 
 $$\Big(\sum_{ \pi_k^n} M_{-}\Delta \A^n\Big)(\omega) \overset{k\to \infty}{\longrightarrow} \Big(\sum_{ \pi^n} M_{-}\Delta \A^n\Big)(\omega):=\sum_{ \pi^n(\omega)} M_{-}(\omega)\Delta \A^n(\omega) ,$$
the sum on the RHS being well defined, as it is finite for each $\omega$.
Since $(\sum_{ \pi_k^n} M_{-}\Delta \A^n)_k$ is dominated by  $C\A^n_1$, it converges also in  $\Lone$, so
\begin{align}\textstyle
\label{AlmostNat}
\E[M_1 \A^n_1]=\E[\sum_{ \pi^n} M_{-}\Delta \A^n] .
\end{align}
We can now apply Lemma \ref{approxSumInt} and Theorem \ref{SumDoob} 
and obtain that
 $\sum_{\pi^n} M_{-}\Delta \A^n$ (resp. $\A_1^n$) 
 converges $\P$ a.s. to $\int_0^1 M_{s-}dA_s$ (resp. $\A_1$);
 since it is dominated by $Ch$ (resp. $h$), we can pass \eqref{AlmostNat} to the limit and obtain that  $A$ is natural.

Assume now that $A$ is natural, and let $A=M+B$ be its Doob-Meyer decomposition; the c\`adl\`ag increasing integrable process $B$ is predictable, thus natural, and now $A=B$ follows from the uniqueness of the Doob-Meyer decomposition of a submartingale of class D into a martingale plus a  natural process, which is easy to prove  (it follows from \cite[Chapter 1, Theorem 4.10]{KaSh88}).
\end{proof}

\section{Consequences for special semimartingales}
\label{Consequences for special semimartingales}
In this section we show how some well known facts about special semimartingales and predictable processes can be derived as  simple consequences of  Theorem \ref{ExPredJumps} applied to the set $F=(0,\infty)$; in particular, we characterize which special semimartingales $S$ have a continuous compensator.

We will often use without explicit mention the following  trivial consequence of the optional sampling theorem:  if $\t$ is an announceable stopping time and $S=M+A$, where $M$ is a uniformly integrable martingale and $A$ is c\`adl\`ag increasing integrable and s.t. $A_0=0$, then $M_{\t-} \in \Lone$ and $\E[\Delta M_{\t}]=0$, so  $S_{\t-} \in \Lone$ and $\E[\Delta S_{\t}]=\E[\Delta A_{\t}]$. 
Also, we will use without further notice the fact that predictability is preserved by stopping (this follows from $X^\t \I_{\{\t>0\}}=X\I_{(0,\t]}+X_\t \I_{(\t,\infty)}$).

\begin{theorem}
\label{PredMart}
Almost every path of a predictable local martingale $M$ is continuous.
\end{theorem}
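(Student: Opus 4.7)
The plan is to apply Theorem \ref{ExPredJumps} to the predictable process $M$ with $F=(0,\infty)$, and then to $-M$ with the same $F$, in order to exhaust all the jumps of $M$ by \emph{predictable} stopping times; the goal is then to show that each of these predictable stopping times must equal $\infty$ almost surely, which will rest on the fact that a uniformly integrable martingale has zero expected jump at any announceable stopping time.

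Concretely, I would first pick localizing stopping times $\rho_k\uparrow\infty$ so that each $M^{\rho_k}\I_{\{\rho_k>0\}}$ is a uniformly integrable martingale, and apply Theorem \ref{ExPredJumps} to the predictable process $M$ with $F=(0,\infty)=\bigcup_{j\geq 1}[1/j,\infty)$, obtaining predictable stopping times $(\s_n)_n$ that exactly exhaust the positive jumps of $M$, so in particular $\Delta M_{\s_n}>0$ on $\{\s_n<\infty\}$. By Corollary \ref{PrImplAnn} each $\s_n$ is announceable, and so the remark preceding that corollary applied to the uniformly integrable martingale $M^{\rho_k}$ yields $\E[\Delta M^{\rho_k}_{\s_n}]=0$ for every $k$. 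A short case analysis comparing $\s_n$ with $\rho_k$ shows that $\Delta M^{\rho_k}_{\s_n}=\Delta M_{\s_n}\I_{\{\s_n\leq\rho_k\}}$: the two jumps agree on $\{\s_n\leq\rho_k\}$, while the stopped process has no jump beyond $\rho_k$. Hence $\E[\Delta M_{\s_n}\I_{\{\s_n\leq\rho_k\}}]=0$, and since this integrand is nonnegative and strictly positive on $\{\s_n<\infty\}$, one concludes $\P(\s_n\leq\rho_k,\s_n<\infty)=0$; letting $k\to\infty$ gives $\P(\s_n<\infty)=0$. Running the same argument with $-M$ (also a predictable local martingale) eliminates the negative jumps, which completes the proof.

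I expect the main subtlety to lie in the interaction between localization and predictability: a priori one would like to simply reduce to the case of a uniformly integrable martingale at the outset, but the stopped process $M^{\rho_k}$ does not cleanly inherit predictability at $\rho_k$, so Theorem \ref{ExPredJumps} is instead applied to the unstopped predictable process $M$ itself, and the localized processes $M^{\rho_k}$ are invoked only at the one step where the zero expected jump property is needed.
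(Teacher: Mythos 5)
Your proof is correct and follows essentially the same route as the paper: use Theorem \ref{ExPredJumps} with $F=(0,\infty)$ to exhaust the positive jumps of $M$ by predictable (hence, by Corollary \ref{PrImplAnn}, announceable) stopping times, invoke the fairness of announceable stopping times for uniformly integrable martingales to kill the expected jump at each $\s_n$, conclude $\P(\s_n<\infty)=0$, and then repeat for $-M$. Your caution about localization is not actually needed: the paper localizes at the outset and relies on the standard fact (recalled at the start of Section \ref{Consequences for special semimartingales}) that predictability is preserved by stopping, so that each $M^{\rho_k}\I_{\{\rho_k>0\}}$ is itself a predictable uniformly integrable martingale to which Theorem \ref{ExPredJumps} applies directly; your alternative of exhausting the jumps of the unstopped $M$ and only bringing in $M^{\rho_k}$ at the expectation step via $\Delta M^{\rho_k}_{\s_n}=\Delta M_{\s_n}\I_{\{\s_n\leq\rho_k\}}$ is a harmless, equally valid rearrangement of the same argument.
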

\begin{proof}
By localization we can assume that $M$ is a uniformly integrable martingale.
Theorem \ref{ExPredJumps} provides us with a  sequence   $(\s_n)_n$ of predictable stopping times which exactly exhausts the positive jumps of $M$ (i.e. the jumps of $M$ in $(0,\infty)$), and Corollary \ref{PrImplAnn} tells us that  $(\s_n)_n$  are announceable.
It follows that  $\E[M_{\s_n}]=\E[M_{\s_n-}]$ and so, since by definition we have $\Delta M_{\s_n}> 0$ on $\{\s_n<\infty\}$ and $\Delta M_{\s_n}= 0$ on $\{\s_n=\infty\}$,  necessarily each $\{\s_n<\infty\}$ has probability zero.
It follows that $\{ \sup_t \Delta M_t> 0 \}=\cup_n \{ \s_n <\infty\}$ has probability zero, and analogously so does  $\{ \inf_t \Delta M_t<0 \}=\{\sup_t \Delta(- M)_t>0 \}$, so $M$ is a.s. continuous.
\end{proof}

We will say that $M+A$ is a  (canonical) semimartingale decomposition of a process $S$ if   $M$ is a local martingale, $A$ is a c\`adl\`ag adapted (resp. predictable) process of finite variation s.t. $A_0=0$ and $S=M+A$. 
A process $S$ admitting a (canonical) semimartingale decomposition is called a (special) semimartingale. 
Recall that the canonical semimartingale decomposition is unique (for a proof see e.g. \cite[Lemma 22.11]{Kall97}), and that the process $A$  is called the \emph{compensator} of $S$; $M$ is called the (local) martingale part of $S$.

We now need the following theorem, whose elementary proof (which we provide below for convenience of the reader) is essentially standard; the only unconventional choice is to prove it using  Lemma \ref{ApproachTimeLemma}. The advantage of this approach  is that it is much easier to show that $\s$ defined in \eqref{1appr} is a  stopping time than showing that $\s:=\inf\{ t \geq 0:  |X_t|  \geq K \} $ is one (see e.g. \cite[Chapter 2, Lemma 75.1]{RoWi_1:94});
of course it would be even easier to use $\t:=\inf\{ t \geq 0:  |X_t|  > K \} $ instead, but the problem is that it is unclear whether  $\t$ is a \emph{predictable} stopping time when $X$ is predictable.

\begin{theorem}
\label{PredThenLocBdd}
Any c\`adl\`ag predictable process $X$ is locally bounded.
\end{theorem}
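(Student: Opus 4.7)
The plan is to construct a localizing sequence from the first-approach times of $X$ to the closed sets $C_n:=\{y\in\R:|y|\geq n\}$. Concretely, I would set
\[
\s_n:=\inf\{t\geq 0:X_t\in C_n\text{ or }X_{t-}\in C_n\},
\]
so by Lemma \ref{ApproachTimeLemma} each $\s_n$ is a predictable stopping time. Since $X$ is c\`adl\`ag (hence pathwise bounded on compacts) and $C_n\supseteq C_{n+1}$, the sequence $(\s_n)_n$ is increasing and tends to $\i$ almost surely.

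Next I would apply Corollary \ref{PrImplAnn} to announce each $\s_n$ by predictable stopping times $(\t_n^k)_k$ with $\t_n^k\uparrow\s_n$ and $\t_n^k<\s_n$ on $\{\s_n>0\}$. The crucial observation, extracted from the proof of Lemma \ref{ApproachTimeLemma}, is that the infimum defining $\s_n$ is attained, so $t<\s_n$ forces $X_t\notin C_n$ and $X_{t-}\notin C_n$, i.e.\ $|X_t|<n$ and $|X_{t-}|<n$. Consequently on $\{\t_n^k>0\}$ (where $\s_n>0$, so in particular $|X_0|<n$) the stopped process satisfies $|X^{\t_n^k}_t|<n$ for all $t$: on $[0,\t_n^k]$ by the observation just made, and on $(\t_n^k,\i)$ because it equals the constant $X_{\t_n^k}$.

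Finally I would pass to a diagonal subsequence. Since $\t_n^k\to\s_n$ a.s.\ as $k\to\i$, I can pick $k_n$ with $\P(\t_n^{k_n}<\s_n-1/n,\ \s_n>1/n)\leq 2^{-n}$; Borel--Cantelli together with $\s_n\uparrow\i$ then yields $\t_n^{k_n}\to\i$ a.s. Monotonizing via $\rho_n:=\max_{m\leq n}\t_m^{k_m}$ produces an increasing sequence of stopping times with $\rho_n\uparrow\i$ a.s., and on $\{\rho_n>0\}$ one has $\rho_n=\t_j^{k_j}$ for some $j\leq n$, hence $|X^{\rho_n}_t|\leq n$ throughout. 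This is the desired localization.

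I do not anticipate a genuine obstacle: the argument is essentially bookkeeping built on the first-approach lemma and the announceability of predictable stopping times. The one subtle point---and the reason for routing through the predictable time $\s_n$ rather than the naive choice $\inf\{t:|X_t|>n\}$---is the need for a stopping time \emph{strictly below} the blow-up time, so that the uniform bound is available simultaneously on $X_t$ and $X_{t-}$ up to and including the stopping time.
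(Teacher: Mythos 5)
Your proof is correct and mirrors the paper's argument step for step: you use the first-approach times $\s_n$ from Lemma \ref{ApproachTimeLemma} (chosen precisely so that predictability is preserved, as you observe at the end), announce them via Corollary \ref{PrImplAnn}, and extract a localizing sequence by a Borel--Cantelli diagonalization. The only cosmetic difference is the final monotonization---you take $\rho_n:=\max_{m\leq n}\t_m^{k_m}$ whereas the paper takes $\rho_i:=\inf_{k\geq i}\t_k^{n_k}$---and, like the paper, you should restrict the Borel--Cantelli event to $\{\s_n<\infty\}$ so that its probability can actually be made small.
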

\begin{proof}
Given $X$ c\`adl\`ag predictable, let $C_k:=(-\infty,k]\cup [k,\infty)$ and
\begin{align}\label{1apprk}
\s_k:=\inf\{ t \geq 0: X_t  \in C_k \text{ or } X_{t-}  \in C_k \} .
\end{align}
 Lemma \ref{ApproachTimeLemma} and Corollary \ref{PrImplAnn} show  that $\s_k$ is an announceable stopping time.
 Trivially $\s_k\leq \s_{k+1}$; since $X$ is c\`adl\`ag, each of its paths is bounded on compacts, so $\s_k\to \infty$. 
 Let $(\t_k^n)_n$ be a sequence of stopping times announcing $\s_k$, and  $(n_k)_k$ be a subsequence s.t. $\P( \tau_k^{n_k}+1/2^k \leq \s_k\ <\infty)<1/2^k$, so that a.s. $ \tau_k^{n_k}+1/2^k \leq \s_k\ <\infty$ holds for at most finitely many $k$'s, and thus the increasing sequence of stopping times $\rho_i:=\inf_{k\geq i}  \tau_k^{n_k}$ converges to $\lim_k\s_k=\infty$. Since  $|X^{\rho_k}|\I_{\{\rho_k>0\}}\leq k$
holds because $\rho_k\leq \tau_k^{n_k}<\s_k$ on $\{\s_k>0\}$, $X$ is locally bounded. 
\end{proof}

 Here an immediate and useful consequence of Theorem \ref{PredThenLocBdd}.
\begin{corollary}
\label{FinSoLocBdd}
If $A$ is c\`adl\`ag predictable and of finite variation then its variation is  locally bounded.
\end{corollary}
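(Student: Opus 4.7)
The plan is to reduce the corollary to Theorem \ref{PredThenLocBdd} by showing that the variation process $V:=\mathrm{var}(A)$ is itself a c\`adl\`ag predictable process, and then invoking local boundedness of c\`adl\`ag predictable processes.

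First I would note that $V$ is c\`adl\`ag: this is a standard property of the variation of a c\`adl\`ag function of finite variation on compacts, since on each compact $[0,t]$ one has $V_t-V_s=\mathrm{var}(A|_{[s,t]})\to 0$ as $s\uparrow t$ with $s<t$, giving left limits, and $V_{t+h}-V_t \to |\Delta A_t|\cdot 0$-like control giving right continuity; more concretely $V$ differs from the sum of a continuous increasing process and $\sum_{s\le\cdot}|\Delta A_s|$ by nothing, and both summands are c\`adl\`ag.

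Second, I would argue that $V$ is predictable. This is precisely the observation already exploited in the proof of Theorem \ref{SumDoob}: since $A$ is adapted, so is $V$; hence $V_{-}$ is c\`ag adapted and therefore predictable, and $A_{-}$ is also c\`ag adapted and therefore predictable. Since $A$ itself is predictable by hypothesis, $A-A_{-}$ is predictable, and so is $|A-A_{-}|$. The pointwise identity
\begin{equation*}
V_t \;=\; V_{t-} + |A_t-A_{t-}|
\end{equation*}
then exhibits $V$ as a sum of two predictable processes, hence $V$ is predictable.

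Finally I would apply Theorem \ref{PredThenLocBdd} to the c\`adl\`ag predictable process $V$, obtaining a sequence of stopping times $\rho_k\uparrow\infty$ such that $V^{\rho_k}\I_{\{\rho_k>0\}}$ is bounded for each $k$; this is exactly the statement that the variation of $A$ is locally bounded. I do not see any real obstacle here: once the predictability of $V$ is acknowledged (an observation that was already used earlier in the paper), the corollary is an immediate specialization of Theorem \ref{PredThenLocBdd}.
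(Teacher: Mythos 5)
Your proof is correct and is exactly the argument the paper intends: the paper states the corollary as an immediate consequence of Theorem \ref{PredThenLocBdd}, and the only content is to observe that $\mathrm{var}(A)$ is c\`adl\`ag and predictable, which you do via the identity $\mathrm{var}(A)_t = \mathrm{var}(A)_{t-} + |\Delta A_t|$ — precisely the observation already made at the start of the proof of Theorem \ref{SumDoob}. (Your sentence about left limits is slightly off — $\mathrm{var}(A|_{[s,t]})\to|\Delta A_t|$ as $s\uparrow t$, not $0$ — but left limits of the increasing process $V$ exist trivially, so this does not affect anything.)
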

From  Corollary \ref{FinSoLocBdd} it follows that if $S$ is a special semimartingale then one can write $S$ as $M+A$ for a local martingale $M$ and a c\`adl\`ag adapted process $A$ of locally integrable variation (the vice versa is also true, and is given by the Doob-Meyer  decomposition).
Moreover, the optional sampling theorem implies that any local martingale is locally integrable (see \cite[Chapter 3, Theorem 38]{Pr04}), thus
in \emph{any} decomposition of a special semimartingale $S$ as $M+A$, where $A$ is a process of \emph{finite} variation and $M$ is a local martingale, the process $A$ is of  locally integrable variation.
The next important characterization of special semimartingales is also a  consequence of Corollary \ref{FinSoLocBdd}.
For its simple proof we refer to \cite[Chapter 3, Theorem 32]{Pr04}); 
we remark that the proof implicitly makes use of the uniqueness of the canonical decomposition  to obtain the existence of a canonical decomposition of $S$ on $[0,\infty)$ from the ones on $[0,\s_n]$.
\begin{corollary}
\label{ContSpecial}
A semimartingale $S$ is special iff the
process $X_t := \sup_{s\leq t} |\Delta S_s|$ is locally integrable (or equivalently if $S^*_t:=\sup_{s\leq t} |S_s|$ is locally integrable).
\end{corollary}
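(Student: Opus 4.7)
The plan is first to reduce the two formulations ($X$ locally integrable and $S^*$ locally integrable) to one another, then to show that either is equivalent to $S$ being special. The inequality $X\leq 2S^*$ gives one direction of the reduction; for the other, I would set $\sigma_n:=\inf\{t:|S_t|>n\}$, observe that by left continuity $|S_{\sigma_n-}|\leq n$, and hence $S^*_{\sigma_n}\leq n+|\Delta S_{\sigma_n}|\leq n+X_{\sigma_n}$, so that combining with a localizing sequence for $X$ yields local integrability of $S^*$.

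For the forward implication, I would use the canonical decomposition $S=M+A$ and bound $S^*\leq M^*+A^*$. Corollary \ref{FinSoLocBdd} immediately gives that $A^*\leq\mathrm{var}(A)$ is locally bounded. For $M^*$, the stopping times $T_k:=\inf\{t:|M_t|\geq k\}$ satisfy $|M^{T_k}_-|\leq k$ and thus $(M^{T_k})^*\leq k+|\Delta M_{T_k}|$; combining $T_k$ with any localizing sequence that turns $M$ into a uniformly integrable martingale (which exists because local martingales are locally integrable) makes $M_{T_k}$ integrable, hence so is $(M^{T_k})^*$, and a diagonal extraction then yields $M^*$ locally integrable.

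For the backward implication, assuming $S^*$ is locally integrable, the plan is to start from an arbitrary (non-canonical) semimartingale decomposition $S=N+B$ (with $N$ a local martingale and $B$ c\`adl\`ag adapted of FV, which exists because $S$ is a semimartingale) and modify it into a canonical one. The idea is to apply the \emph{fundamental theorem of local martingales} to split $N=L+V$, where $L$ is a local martingale with bounded jumps and $V$ is a local martingale of locally integrable variation; then $S=L+(V+B)$ with $V+B$ of FV, and a careful argument based on the local integrability of $S^*$ shows that $V+B$ is in fact of locally integrable variation, so that its dual predictable projection yields the predictable FV part of the canonical decomposition. This backward direction is the main obstacle, relying as it does on the nontrivial fundamental theorem of local martingales and on the theory of dual predictable projections of processes of locally integrable variation; for the details I would defer to \cite[Chapter 3, Theorem 32]{Pr04}, as the paper itself does.
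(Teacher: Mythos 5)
Your proposal is correct and follows essentially the paper's route: the paper itself gives no self-contained proof, instead deferring to \cite[Chapter 3, Theorem 32]{Pr04} while highlighting that Corollary \ref{FinSoLocBdd} supplies the local boundedness of $\mathrm{var}(A)$ — precisely the two ingredients you isolate, with the stopping-time reductions between $X$, $S^*$ and the local martingale part spelled out in slightly more detail than the paper bothers to do. One small cosmetic point: in the forward direction, the phrase ``which exists because local martingales are locally integrable'' is circular-sounding (that is the very fact you are re-deriving for $M^*$); what you actually need and use there is only the defining localizing sequence of the local martingale, and likewise the ``careful argument'' in the backward direction is really driven by local integrability of $\sup_s|\Delta S_s|$ rather than of $S^*$, though the two are equivalent by your opening reduction.
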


It follows from Corollary \ref{ContSpecial}  that any continuous semimartingale is special, and then Theorem \ref{PredMart} implies that 
 its  local martingale part and compensator are continuous processes.
More generally, one can characterize which special semimartingales $S$ have a continuous compensator.

\begin{theorem}
\label{Acont}
 If $S=M+A$ is the canonical  decomposition of the special semimartingale $S$, then $A$ is a.s.  continuous iff, for all announceable stopping times $\t$,  $\E[\Delta S^{\s_n}_{\t}]=0$
holds for one (and thus all) sequences of stopping times $\s_n\uparrow \infty$ s.t.
$\I_{\{\s_n>0\}}(\sup_{t\leq \s_n}|M_t| +var(A)_{\s_n}) \in \Lone$.
In particular, if $S=M+A$ is the Doob-Meyer decomposition of a submartingale $S$ of class D, then $A$ is a.s. continuous iff  $\E[\Delta S_{\t}]=0$ for all announceable stopping times $\t$. 
\end{theorem}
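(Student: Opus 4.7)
The plan is to encode the discontinuities of the finite-variation predictable process $A$ via the exhaustion machinery of Theorem \ref{ExPredJumps}, then eliminate them using the hypothesis, exploiting that predictable stopping times are announceable (Corollary \ref{PrImplAnn}) and that every announceable time is fair against a uniformly integrable martingale (the remark stated just before Theorem \ref{PredMart}). Both implications will pass through the intermediate property ``$A$ is a.s.\ continuous'', which will also justify the parenthetical ``one (and thus all)''.

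For the nontrivial direction, I would assume $\E[\Delta S^{\s_n}_\t]=0$ holds for all announceable $\t$ and one admissible sequence $(\s_n)_n$. Applying Theorem \ref{ExPredJumps} to the predictable process $A$ with $F=(0,\i)$ produces predictable stopping times $(\t_k)_k$ exactly exhausting the strictly positive jumps of $A$, and Corollary \ref{PrImplAnn} makes each $\t_k$ announceable. The admissibility of $(\s_n)_n$ makes $M^{\s_n}\I_{\{\s_n>0\}}$ a uniformly integrable martingale, so the fair-stopping remark yields $\E[\Delta M^{\s_n}_{\t_k}]=0$. Combining this with the path identity $\Delta S^{\s_n}_{\t_k}=\Delta S_{\t_k}\I_{\{\t_k\leq \s_n\}}$ and $S=M+A$, the hypothesis reduces to
\begin{align*}
\E\bigl[\Delta A_{\t_k}\I_{\{\t_k\leq \s_n\}}\bigr]=0;
\end{align*}
since $\Delta A_{\t_k}>0$ on $\{\t_k<\i\}$ by construction, this forces $\P(\t_k<\i,\,\t_k\leq \s_n)=0$, and letting $n\to\i$ with $\s_n\uparrow\i$ a.s.\ gives $\P(\t_k<\i)=0$ for all $k$. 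Thus $A$ has no positive jumps, and the same argument with $F=(-\i,0)$ eliminates the negative ones, so $A$ is a.s.\ continuous.

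Conversely, if $A$ is a.s.\ continuous then $\Delta A^{\s_n}_\t=0$ for every announceable $\t$ and every admissible $(\s_n)_n$, whence $\Delta S^{\s_n}_\t=\Delta M^{\s_n}_\t$, and the fair-stopping remark applied to the uniformly integrable martingale $M^{\s_n}\I_{\{\s_n>0\}}$ yields $\E[\Delta S^{\s_n}_\t]=0$. For the submartingale-of-class-D specialization, class D already makes $M$ itself a uniformly integrable martingale, so the remark gives $\E[\Delta M_\t]=0$ and hence $\E[\Delta S_\t]=\E[\Delta A_\t]$ directly (no localization needed); the exhaustion argument of the previous paragraph then shows that $A$ is a.s.\ continuous iff $\E[\Delta S_\t]=0$ for all announceable $\t$. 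The main obstacle is really just the bookkeeping of the $\s_n$-localization and the minor verification that the fair-stopping remark may be fed the process $M^{\s_n}\I_{\{\s_n>0\}}$; everything else is a clean assembly of Theorem \ref{ExPredJumps}, Corollary \ref{PrImplAnn}, and optional sampling.
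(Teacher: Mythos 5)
Your proposal is correct and follows essentially the same route as the paper's proof: apply Theorem~\ref{ExPredJumps} with $F=(0,\infty)$ and Corollary~\ref{PrImplAnn} to exhaust the positive jumps of $A$ by announceable stopping times, then invoke the fair-stopping/optional-sampling remark to cancel the martingale contribution and deduce that each $\Delta A_{\t_k}$ vanishes (and symmetrically for negative jumps). The only difference is cosmetic: the paper compresses the $\s_n$-localization into the phrase ``assume by localization that $\sup_t|M_t|$ and $var(A)$ are integrable,'' whereas you carry the indicator $\I_{\{\t_k\le\s_n\}}$ explicitly and let $n\to\infty$.
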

\begin{proof} 
One implication is obvious.
For the opposite one, assume by localization that $\sup_t|M_t|$ and $var(A)$ are integrable, and let  $(\t_n)_n$ be a sequence of predictable stopping times which exactly exhausts the positive jumps of $A$.
Since by Corollary \ref{PrImplAnn} predictable stopping times are announceable,  we obtain 
that $0=\E[\Delta S_{\t_n}]=\E[\Delta A_{\t_n}]$. Since $\Delta A_{\t_n}\geq 0$, it follows that  $\Delta A_{\t_n}=0$ a.s. for all $n$, so $\t_n=\infty$ a.s. and $\P(\{\sup_t \Delta A_t >0\})=\sum_n \P(\{\t_n<\infty\})=0$.
Analogously \mbox{$\P(\{\Delta \sup_t(-A)_t >0\})=0$}, so $A$ has a.s. continuous paths.
\end{proof}

\end{document}